\DeclareMathOperator{\Lp}{L}
\DeclareMathOperator{\Ehr}{Ehr}
\DeclareMathOperator{\conv}{conv}
\theoremstyle{plain}
\newtheorem{theorem}{Theorem}[section]
\newtheorem{proposition}[theorem]{Proposition}
\newtheorem{corollary}[theorem]{Corollary}
\newtheorem{conjecture}[theorem]{Conjecture}
\newtheorem{lemma}[theorem]{Lemma}
\theoremstyle{definition}
\newcommand{\eraselater}[1] 
{}
\title{Stapledon Decompositions and Inequalities for Coefficients of Chromatic Polynomials}
\author{Emerson León\\
Universidad de los Andes\\ 
Bogotá, Colombia\\
\texttt{ej.leon@uniandes.edu.co}}
\begin{document}

\maketitle
\begin{abstract}
We use a polynomial decomposition result by Stapledon to show that the numerator polynomial of the Ehrhart series of an open polytope is the difference of two symmetric polynomials with nonnegative integer coefficients. We obtain a related decomposition for order polytopes and for the numerator polynomial of the corresponding series for chromatic polynomials. The nonnegativity of the coefficients in such decompositions provide inequalities satisfied by the coefficients of chromatic polynomials for any simple graph. 
\end{abstract}

\section{Introduction}
Let $P$ be a $d$-dimensional polytope with vertices in $\ensuremath{\mathbb{Z}}^d$, with Ehrhart polynomial  $\Lp_P(n)$ 
counting the number of integer points in $nP$
(\cite{ehrhart}) and Ehrhart series  $\Ehr_P(z)=1+\sum_{n=1}^{\infty}\Lp_P(n)z^n$. It is known that the Ehrhart series is a rational function of the form 
$$\Ehr_P(z)=\frac{h^*(z)}{(1-z)^{d+1}}$$ 
where $h^*(z)$ is a polynomial with nonnegative integer coefficients, of degree $s=\deg(h^*)\le d$ (see, for example,  \cite{beckrobins}). In 2008, Stapledon \cite{stapledon1} proved that if $l=d+1-s$ then one can find two symmetric polynomials $a(z)=z^da\left(\frac{1}{z}\right)$ and $b(z)=z^{d-l}b\left(\frac{1}{z}\right)$ with nonnegative integer coefficients such that 
$$(1 + z + \cdots + z^{l-1})h^*(z) = a(z) + z^lb(z).$$ 

In general, any polynomial can be written as the sum of two symmetric polynomials \cite[Lemma 2.3]{stapledon1}, but having nonnegative coefficients $a_i\ge 0$ and $b_i\ge 0$ is what is special in the case of $h^*$-polynomials of convex lattice polytopes. If $l=1$, this is stronger than the fact that polynomials $h^*$ have nonnegative coefficients. Some of these inequalities were already known due to results in commutative algebra  \cite{hibi},  \cite{StanleyHFCMdomain}, and the work by Stapledon provided a new geometric proof of these facts.

Denote by $P^{\circ}$ the interior of $P$ and by  $\Lp_{P^{\circ}}(n)$ the number of lattice points in the interior of $nP$. 
Similarly, we denote $\Ehr_P^{\circ}(z)=\sum_{n= 1}^{\infty}\Lp_{P^{\circ}}(n)z^n$.
Our first result is a polynomial decomposition for the numerator polynomial of  $\Ehr_P^{\circ}(z)$ for any lattice polytope.

\begin{theorem}\label{th3}%
Let $P$ be a lattice polytope of dimension $d$.  Let $h_{P}$ be the numerator polynomial of the Ehrhart series of the open polytope $P^{\circ}$.
Then $h_{P}$ can be decomposed as $$h_{P}(z)=a_P(z)-b_P(z)$$ where $a_P(z)=z^{d+1}a_{P}\left(\frac{1}{z}\right)$ and $b_P(z)=z^{d}b_{P}\left(\frac{1}{z}\right)$ are symmetric polynomials with nonnegative coefficients.
\end{theorem}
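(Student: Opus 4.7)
The strategy is to express $h_P$ in terms of $h^*$ via reciprocity, and then combine Stapledon's decomposition for $P$ with a ``reversed'' version of it.

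The first step is Ehrhart--Macdonald reciprocity, which as rational functions gives $\Ehr_{P^{\circ}}(z) = (-1)^{d+1}\Ehr_P(1/z)$. Writing $\Ehr_P(z) = h^*(z)/(1-z)^{d+1}$ and clearing denominators yields
\[
h_P(z) = z^{d+1} h^*(1/z).
\]
In particular $h_P$ is a polynomial of degree $d+1$ with nonnegative integer coefficients whose lowest-degree term is $z^l$.

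The second step is to derive a companion to Stapledon's identity. Substituting $z\mapsto 1/z$ in $(1+z+\cdots+z^{l-1})h^*(z) = a(z) + z^l b(z)$, multiplying by $z^d$, and using the symmetries of $a$ and $b$, the right-hand side becomes $a(z) + b(z)$, while the left-hand side, after collecting powers and using $s+l = d+1$, becomes $(1+z+\cdots+z^{l-1})\,z^s h^*(1/z)$. Since $z^l\cdot z^s h^*(1/z) = z^{d+1}h^*(1/z) = h_P(z)$, subtracting $z^l$ times this new identity from Stapledon's original and dividing out the factor $1+z+\cdots+z^{l-1}$ yields the key identity
\[
h^*(z) - h_P(z) \;=\; (1-z)\, a(z).
\]

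Finally, I would set $a_P(z) := h_P(z) + a(z)$ and $b_P(z) := a(z)$. Then $a_P - b_P = h_P$ by construction; both polynomials have nonnegative integer coefficients since $h_P$ and $a$ do; and $b_P = a$ is symmetric of degree $d$ by Stapledon's theorem. The remaining symmetry $z^{d+1}a_P(1/z) = a_P(z)$ reduces, using $z^{d+1} h_P(1/z) = h^*(z)$ and $z^{d+1} a(1/z) = z\, a(z)$ (the latter from the symmetry of $a$), to
\[
h^*(z) + z\, a(z) \;=\; h_P(z) + a(z),
\]
which is exactly the key identity.

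The main obstacle is establishing the key identity $h^*(z) - h_P(z) = (1-z)a(z)$; once it is in hand, the verification of the theorem is a short formal computation. The non-obvious ingredient is recognizing that substituting $z\mapsto 1/z$ in Stapledon's identity produces an analogous identity whose combination with the original cleanly factors through $1-z^l = (1-z)(1+z+\cdots+z^{l-1})$.
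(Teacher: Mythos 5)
Your proof is correct, and in fact it produces the same decomposition as the paper (your $b_P=a$ and $a_P=h_P+a=h^*+za$ coincide with the paper's polynomials), but it gets there by a genuinely different route. The paper applies Stapledon's theorem twice: once to $P$ and once to the pyramid $\widehat P=\conv(\{0\}\cup\{(v,1):v\text{ vertex of }P\})$, which has the same $h^*$-polynomial but dimension $d+1$; subtracting $z$ times the identity for $P$ from the identity for $\widehat P$ gives $h^*(z)=\widehat a(z)-za^*(z)$, and reversing coefficients yields $h_P=\widehat a-a^*$, so the paper takes $a_P=\widehat a$ and $b_P=a^*$. You instead invoke Stapledon's theorem only once and manufacture the companion identity purely algebraically, by substituting $z\mapsto 1/z$ in $(1+\cdots+z^{l-1})h^*(z)=a(z)+z^lb(z)$, multiplying by $z^d$ and using the symmetries of $a$ and $b$; combined with $1-z^l=(1-z)(1+\cdots+z^{l-1})$ this gives the key identity $h^*(z)-h_P(z)=(1-z)a(z)$, from which the required symmetry of $a_P:=h_P+a$ follows at once, while nonnegativity of $a_P$ comes from nonnegativity of $h_P$ (i.e.\ of $h^*$) and of $a$. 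I checked the computations (the companion identity $(1+\cdots+z^{l-1})z^sh^*(1/z)=a(z)+b(z)$, the cancellation of the $b$-terms, and the division by the nonzero polynomial $1+\cdots+z^{l-1}$) and they are sound, including the edge cases $l=1$ and $b=0$. Your argument is more self-contained, needing no auxiliary polytope and no invariance of $h^*$ under pyramiding; what the paper's route buys in exchange is a geometric interpretation, namely that $a_P$ is itself the $a$-polynomial of the Stapledon decomposition of a lattice polytope (the pyramid $\widehat P$), so its nonnegativity, and any further inequalities known for such $a$-polynomials, come for free from the geometric theorem rather than from adding two nonnegative polynomials.
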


An interesting question to ask is if it is possible to get similar decompositions for other combinatorial polynomials. There is a close relationship between the chromatic polynomial of a graph $G$ and Ehrhart polynomials 
of order polytopes corresponding to different posets obtained from $G$ by fixing an acyclic orientation. 


In case that $P$ is an order polytope we can modify Theorem \ref{th3} and prove the following result.
\begin{theorem}\label{th1} 
Let $O_{\Pi}$ be an order polytope of dimension $d$ (corresponding to a poset $\Pi$ on $d$ vertices).  Let $h_{\Pi}$ be the numerator polynomial of the Ehrhart series of $O^{\circ}_{\Pi}$, i.\ e. such that $$\Ehr_{O^{\circ}_{\Pi}}(z)=\frac{h_{\Pi}(z)}{(1-z)^{d+1}}.$$ Then $h_{\Pi}$ can be decomposed as $h_{\Pi}(z)=a_\Pi(z)+zb_\Pi(z)$ where the polynomials $a_\Pi(z)$ and $b_\Pi(z)$ are symmetric with $a_\Pi(z)=z^{d+1}a_{\Pi}\left(\frac{1}{z}\right)$ and $b_\Pi(z)=z^{d}b_{\Pi}\left(\frac{1}{z}\right)$, and so that $b_{\Pi}(z)$ and $-a_{\Pi}(z)$ have nonnegative coefficients.
\end{theorem}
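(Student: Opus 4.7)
My plan is to derive Theorem~\ref{th1} from Stapledon's decomposition applied to $h^*(z) := h^*_{O_\Pi}(z)$. Let $s = \deg h^*$ and $l = d + 1 - s$; because the strict open order polytope has no lattice points at dilation one ($\Lp_{O^\circ_\Pi}(1) = 0$ for $d \geq 1$), we have $h^*_d = 0$, equivalently $l \geq 2$. Stapledon's theorem supplies nonnegative symmetric polynomials $a^*, b^*$ with $a^*(z) = z^d a^*(1/z)$, $b^*(z) = z^{d-l} b^*(1/z)$, and
\[
(1 + z + \cdots + z^{l-1})\, h^*(z) = a^*(z) + z^l b^*(z).
\]

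The first step is to derive the identity $h_\Pi(z) = h^*(z) + (z-1)\,a^*(z)$. Substituting $z \mapsto 1/z$ in Stapledon's relation and multiplying by $z^{d+1}$, using the symmetries of $a^*$ and $b^*$, yields $(1 + z + \cdots + z^{l-1})\,h_\Pi(z) = z^l(a^*(z) + b^*(z))$. Subtracting this from Stapledon's relation gives $(1 + z + \cdots + z^{l-1})(h_\Pi - h^*) = (z^l - 1)\,a^* = -(1-z)(1 + z + \cdots + z^{l-1})\,a^*$, and dividing through by $1 + z + \cdots + z^{l-1}$ yields the claim.

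Setting $a_\Pi := h^* - a^*$ and $b_\Pi := a^*$ then gives $h_\Pi = a_\Pi + z b_\Pi$ immediately. The symmetry $b_\Pi(z) = z^d b_\Pi(1/z)$ is exactly the Stapledon symmetry of $a^*$. A direct calculation shows $z^{d+1} a_\Pi(1/z) = h_\Pi(z) - z a^*(z) = a_\Pi(z)$, verifying $a_\Pi(z) = z^{d+1} a_\Pi(1/z)$. The nonnegativity of $b_\Pi = a^*$ is Stapledon's theorem.

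The main obstacle is to prove that $-a_\Pi(z) = a^*(z) - h^*(z)$ has nonnegative coefficients. For indices $0 \leq i \leq l - 1$, matching coefficients in Stapledon's relation gives the explicit formula $a^*_i = h^*_0 + h^*_1 + \cdots + h^*_i$ and hence $a^*_i \geq h^*_i$. For $i > s$ the inequality is trivial since $h^*_i = 0$. The difficult range is $l \leq i \leq s$: here the order-polytope hypothesis is essential (the inequality can fail for general lattice polytopes even when $l \geq 2$). My plan is to combine the symmetry $a^*_i = a^*_{d-i}$, the Stapledon relation $a^*_i + b^*_{i-l} = \sum_{k=i-l+1}^{i} h^*_k$, and inequalities specific to descent polynomials of linear extensions---using the combinatorial formula $h^*_i = \#\{\sigma \in \mathcal{L}(\Pi) : \mathrm{des}(\sigma) = i\}$---to bound $h^*_i$ by the relevant partial sums. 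The boundary case $i = s$ reduces to $h^*_s \leq h^*_0 + h^*_1 + \cdots + h^*_{l-1}$, which follows by symmetry from a Stapledon-type inequality, while the interior cases are expected to require additional combinatorial input specific to order polytopes; carrying out this middle-range comparison is the bulk of the work.
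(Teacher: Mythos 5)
Your setup is correct and in fact lands on the same decomposition as the paper: the identity $h_\Pi(z)=h^*(z)+(z-1)a^*(z)$ is right, so $b_\Pi=a^*$ and $a_\Pi=h^*-a^*$, the symmetry checks go through, and $b_\Pi\ge 0$ is Stapledon's theorem. But the entire content of Theorem \ref{th1} is the remaining claim that $a^*(z)-h^*(z)$ has nonnegative coefficients, and your proposal does not prove it: in the range $l\le i\le s$ you offer only a plan (descent statistics, ``additional combinatorial input''), which is exactly where the order-polytope hypothesis must enter. Worse, the route you sketch for the boundary case cannot work as stated: $h^*_s\le h^*_0+\cdots+h^*_{l-1}$ is not a consequence of Stapledon-type inequalities for general lattice polytopes. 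Take the Reeve simplex in dimension $d=3$ with $h^*(z)=1+(m-1)z^2$, so $s=2$, $l=2$: the inequality reads $m-1\le 1$, false for $m\ge 3$, and indeed the conclusion of Theorem \ref{th1} fails for this polytope even though it has no interior lattice points ($l\ge 2$). Hence no argument using only Theorem \ref{stapledon} applied to $O_\Pi$ together with nonnegativity and symmetry of the $a^*,b^*,h^*$ data can close the gap; genuinely order-polytope-specific input is required, and your proposal stops exactly at that point.

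The paper supplies that input geometrically. It projects $O_\Pi$ by $\mu$ along $(1,\dots,1)$ onto the hyperplane $\sum_i x_i=0$ and proves a lemma that the $(d-1)$-dimensional lattice polytope $\mu(O_\Pi)$ has the \emph{same} numerator polynomial $h^*$: the canonical unimodular triangulation $T$ of $O_\Pi$ projects to a unimodular triangulation $T_\mu$ of $\mu(O_\Pi)$ with $f_T(z)=(1+z)f_{T_\mu}(z)$, so $h_T=h_{T_\mu}$. Applying Theorem \ref{stapledon} in dimension $d-1$ gives $(1+\cdots+z^{l-2})h^*(z)=a_1^*(z)+z^{l-1}b_1^*(z)$ with $a_1^*\ge 0$, and since the $b$-parts of the two Stapledon decompositions coincide (formula (\ref{abeq2}) depends only on $s$), subtracting $z$ times this from the dimension-$d$ relation yields $h^*(z)=a^*(z)-z\,a_1^*(z)$. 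Thus $a^*-h^*=z\,a_1^*\ge 0$, which is precisely your missing inequality (in your notation $-a_\Pi(z)=z\,a_1^*(z)$). So your proposal is a correct reduction but is missing the key lemma; some statement to the effect that the $h^*$-polynomial of an order polytope is realized by a $(d-1)$-dimensional lattice polytope (or an equivalent combinatorial substitute) is indispensable for the middle-range coefficients.
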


Let $G$ be a graph with $d$ vertices and chromatic polynomial $\chi_G(n)$. Define   $h_G(z)$ to be the polynomial such that $$\sum_{n= 0}^{\infty}\chi_G(n)z^n=\frac{ h_G(z)}{(1-z)^{d+1}}.$$ 

\begin{theorem}\label{th2}
Let $G$ be a graph on $d$ vertices. Then the polynomial $zh_G(z)$ can be decomposed as $zh_G(z)=a(z)+zb(z)$ where $a(z)=z^{d+1}a\left(\frac{1}{z}\right)$ and $b(z)=z^{d}b\left(\frac{1}{z}\right)$ are symmetric, and so that $b(z)$ and $-a(z)$ have nonnegative coefficients.
\end{theorem}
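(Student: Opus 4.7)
The plan is to reduce Theorem \ref{th2} to Theorem \ref{th1} via the classical decomposition of the chromatic polynomial through acyclic orientations. For each acyclic orientation $\sigma$ of $G$, let $\Pi_\sigma$ denote the poset on $V(G)$ generated by the relations $u < v$ whenever $u\to v$ is an edge of $\sigma$. The standard observation (due to Stanley) is that every proper $n$-coloring is compatible with a unique acyclic orientation (orient each edge from the smaller to the larger color), and the colorings compatible with $\sigma$ are exactly the strict order-preserving maps $\Pi_\sigma \to [n]$. Since strict order-preserving maps $\Pi_\sigma \to [n]$ are in bijection with interior lattice points of $(n+1)\,O_{\Pi_\sigma}$, one obtains the counting identity
$$\chi_G(n) = \sum_\sigma \Lp_{O^{\circ}_{\Pi_\sigma}}(n+1),$$
where $\sigma$ ranges over the acyclic orientations of $G$.

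The next step is to translate this to generating functions. Since each poset $\Pi_\sigma$ has exactly $d$ elements, all Ehrhart series have denominator $(1-z)^{d+1}$, and multiplying by $z$ shifts the index by one:
$$\frac{zh_G(z)}{(1-z)^{d+1}} = z\sum_{n\ge 0}\chi_G(n)z^n = \sum_\sigma \sum_{n\ge 0}\Lp_{O^{\circ}_{\Pi_\sigma}}(n+1)z^{n+1} = \sum_\sigma \frac{h_{\Pi_\sigma}(z)}{(1-z)^{d+1}}.$$
Comparing numerators yields the key identity
$$zh_G(z) = \sum_\sigma h_{\Pi_\sigma}(z).$$

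Once this is in place, the theorem follows by summing the decompositions provided by Theorem \ref{th1}. For each $\sigma$, write $h_{\Pi_\sigma}(z) = a_{\Pi_\sigma}(z) + z\,b_{\Pi_\sigma}(z)$ with $a_{\Pi_\sigma}(z) = z^{d+1}a_{\Pi_\sigma}(1/z)$, $b_{\Pi_\sigma}(z) = z^{d}b_{\Pi_\sigma}(1/z)$, and with $-a_{\Pi_\sigma}$ and $b_{\Pi_\sigma}$ having nonnegative integer coefficients. Setting $a(z) := \sum_\sigma a_{\Pi_\sigma}(z)$ and $b(z) := \sum_\sigma b_{\Pi_\sigma}(z)$ immediately gives $zh_G(z) = a(z) + z\,b(z)$. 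Both symmetries (centered at the same point, since every $\Pi_\sigma$ has size $d$) and both sign conditions are preserved under finite summation, which completes the argument. The only delicate point is bookkeeping the shift in the identity $\chi_G(n) = \sum_\sigma \Lp_{O^{\circ}_{\Pi_\sigma}}(n+1)$ and tracking how multiplication by $z$ on the chromatic side interacts with the Ehrhart series; a quick sanity check (e.g.\ $G=K_2$, where both acyclic orientations yield the $2$-chain with $h_{\Pi_\sigma}(z)=z^3$, summing to $zh_G(z)=2z^3$) confirms the matching of numerators.
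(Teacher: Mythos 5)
Your proposal is correct and follows essentially the same route as the paper: Stanley's acyclic-orientation decomposition of $\chi_G$ together with $\Omega^{\circ}_{\Pi}(n)=\Lp_{O^{\circ}_{\Pi}}(n+1)$ gives $zh_G(z)=\sum_{\sigma}h_{\Pi_\sigma}(z)$, and the decomposition is then obtained by summing the decompositions from Theorem \ref{th1}, using that all posets have $d$ elements so the symmetries and sign conditions are preserved under addition.
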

The polynomial $zh_G(z)$ appears naturally as the numerator polynomial for the sum of the ehrhart series for a set of open order polytopes related with the graphs. The fact that the coefficients of $a(z)$ and $b(z)$ are nonnegative can be used to obtain the following inequalities.

\begin{theorem}\label{cor}
 For any graph $G$ with $d$ vertices, $h_G(z)$ is a polynomial of degree $d$ with nonnegative coefficients. Also, if $h_G(z)=\sum_{i=0}^{d} h_iz^i$, then for $1\le i\le \lfloor \frac{d+1}{2}\rfloor$, 
 $$h_{d}+\cdots+h_{d-i+1}-h_0-\cdots-h_{i-1}\ge 0.$$
\end{theorem}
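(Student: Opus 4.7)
My plan is to deduce Theorem~\ref{cor} directly from Theorem~\ref{th2} by comparing coefficients and exploiting the symmetries of $a(z)$ and $b(z)$. Writing $a(z)=\sum_{i=0}^{d+1}a_iz^i$ and $b(z)=\sum_{i=0}^{d}b_iz^i$, with $a_{i}=a_{d+1-i}\le 0$ and $b_{i}=b_{d-i}\ge 0$, the identity $zh_G(z)=a(z)+zb(z)$ forces $a_0=0$ (hence $a_{d+1}=0$ by symmetry) and $h_j=a_{j+1}+b_j$ for $0\le j\le d$.

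For the main inequality I would compute
$$\sum_{k=0}^{i-1}h_{d-k}-\sum_{k=0}^{i-1}h_k=\sum_{k=0}^{i-1}\bigl((a_{d-k+1}-a_{k+1})+(b_{d-k}-b_k)\bigr).$$
The $b$-terms cancel since $b_{d-k}=b_k$, and the symmetry $a_{d-k+1}=a_k$ reduces the remaining sum to the telescope $\sum_{k=0}^{i-1}(a_k-a_{k+1})=a_0-a_i=-a_i$, which is nonnegative because $-a(z)$ has nonnegative coefficients. This yields exactly the stated inequality for $1\le i\le\lfloor(d+1)/2\rfloor$; beyond that range the analogous inequalities become redundant by the symmetry $-a_i=-a_{d+1-i}$.

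For the nonnegativity of the coefficients of $h_G$ and for $\deg h_G=d$, I would fall back on the combinatorial description of $h_G$. Using Stanley's decomposition $\chi_G(n)=\sum_{\sigma}\bar\Omega_{\Pi_\sigma}(n)$ summed over acyclic orientations $\sigma$ of $G$, together with $\bar\Omega_{\Pi_\sigma}(n)=\Lp_{O^{\circ}_{\Pi_\sigma}}(n-1)$, one obtains $zh_G(z)=\sum_{\sigma}h_{\Pi_\sigma}(z)$. Each $h_{\Pi_\sigma}(z)=z^{d+1}h^*_{\Pi_\sigma}(1/z)$ is the reversal of the ordinary (nonnegative) $h^*$-polynomial of the closed order polytope, hence has nonnegative coefficients, and so does $h_G$. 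Finally, $h_d=(-1)^d\chi_G(-1)$ equals the number of acyclic orientations of $G$ by Stanley's reciprocity, which is strictly positive, so $\deg h_G=d$.

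There is no genuine obstacle once Theorem~\ref{th2} is in hand: the inequality is a short telescoping computation, and the degree/nonnegativity statement reduces to standard facts about the Ehrhart series of open order polytopes. The only subtlety worth flagging is that Theorem~\ref{th2} controls only the partial sums appearing on the left-hand side of the inequality and not the individual $h_i$ themselves, which is why the order-polytope description of $h_G$ must be invoked separately to establish coefficient-wise nonnegativity.
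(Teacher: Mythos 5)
Your proof is correct and follows essentially the same route as the paper: the inequality is exactly $-a_i\ge 0$ for the Stapledon-type decomposition of $zh_G(z)$ from Theorem \ref{th2} (the paper reads this off from the explicit formulas (\ref{abeq1})--(\ref{abeq2}) with $s=d+1$, $l=1$, while you re-derive the same expression by telescoping with the symmetries $a_i=a_{d+1-i}$, $b_i=b_{d-i}$ and $a_0=0$), and the nonnegativity and degree statements come from $zh_G(z)=\sum_{\rho}h_{\Pi_\rho}(z)$ together with Corollary \ref{reversing}, exactly as in the paper's proof. One small slip: the shift relation should be $\Lp_{O^{\circ}_{\Pi_\sigma}}(n)=\Omega^{\circ}_{\Pi_\sigma}(n-1)$, i.e.\ $\Omega^{\circ}_{\Pi_\sigma}(n)=\Lp_{O^{\circ}_{\Pi_\sigma}}(n+1)$, not $\Lp_{O^{\circ}_{\Pi_\sigma}}(n-1)$ as you wrote (taken literally that would yield $h_G(z)=z\sum_{\sigma}h_{\Pi_\sigma}(z)$), but the identity $zh_G(z)=\sum_{\sigma}h_{\Pi_\sigma}(z)$ you actually use is the correct one.
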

Since it is possible to write the coefficients of $h_G$ as linear combinations in the coefficients of $\chi_G$, Corollary \ref{cor} gives us a family of inequalities that must be satisfied  by the coefficients of chromatic polynomials.  


We continue as follows. In Section 2 we go through the definition and main properties of order polytopes and their relationship with chromatic polynomials. In Section 3 we state some basic facts of Ehrhart theory and the Stapledon decomposition from \cite{stapledon1}.  Section 4 contains the proofs of theorems \ref{th3} and \ref{th1} while in Section 5 we prove Theorem \ref{th2} and Corollary \ref{cor}. Section 6 include some further conjectures that might improve the inequalities from Theorem \ref{cor} (See Conjecture \ref{conjcor}). 

\section{Order Polytopes and Chromatic Polynomials}

Here we recall some basic facts about order polytopes and their relationship with chromatic polynomials and Ehrhart Theory. 
Let $\Pi$ be a poset on $d$ elements $p_1, \ldots, p_d$. 
  Denote by  $\Omega_\Pi(n)$ the number of \emph{weak order-preserving} maps from $\Pi$ to  $\{1,\ldots, n\}$. These are maps  $\phi:\Pi\rightarrow \{1,\ldots, n\}$ so that if $p_i<_\Pi p_j$, then $\phi(p_i)\le \phi(p_j).$
  Also let $\Omega^{\circ}_{\Pi}(n)$ be the number of \emph{strict order-preserving} maps $\Pi$ to  $\{1,\ldots, n\}$ (so that if $p_i<_\Pi p_j$, then $\phi(p_i)<  \phi(p_j)$). These functions are known as \emph{order polynomials}, due to the following result.
 
\begin{theorem}[Stanley \cite{stanleyCLP}]
The functions $\Omega_\Pi(n)$ and $\Omega^{\circ}_{\Pi}(n)$ are polynomials in $n$, such that $$\Omega^{\circ}_{\Pi}(n)=(-1)^d \Omega_\Pi(-n).$$
\end{theorem}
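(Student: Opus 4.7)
The strategy is to decompose order-preserving maps into classes indexed by the linear extensions of $\Pi$, count each class by a binomial coefficient, and then derive the reciprocity from the identity $\binom{-m}{d} = (-1)^d \binom{m+d-1}{d}$.

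Fix an auxiliary linear extension $\tau$ of $\Pi$. Given a weak order-preserving map $\phi: \Pi \to \{1,\ldots,n\}$, I sort the elements of $\Pi$ by $\phi$-value, breaking ties in favor of the element appearing earlier in $\tau$. The resulting ordering is a linear extension $\sigma$ of $\Pi$ (it refines $\Pi$ because $\tau$ does), and $\phi$ is encoded by the weakly increasing sequence $a_i = \phi(p_{\sigma(i)})$, with strict inequality forced precisely at the descents of $\sigma$ relative to $\tau$. For a fixed $\sigma$, the substitution $b_i = a_i - |\{k < i : k \text{ is a descent of } \sigma\}|$ counts admissible sequences in $\{1,\ldots,n\}$ as $\binom{n+d-1-\mathrm{des}(\sigma)}{d}$, where $\mathrm{des}(\sigma)$ denotes the number of descents. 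Summing over the linear extensions of $\Pi$ yields
$$\Omega_\Pi(n) = \sum_\sigma \binom{n+d-1-\mathrm{des}(\sigma)}{d},$$
which is manifestly a polynomial in $n$ of degree $d$.

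For $\Omega^\circ_\Pi(n)$ I repeat the construction but reverse the tie-breaking rule, assigning equal $\phi$-values to the element appearing \emph{later} in $\tau$. Strictness of $\phi$ forces ties to occur only between incomparable elements, so the output is still a linear extension $\sigma$ of $\Pi$. Now equalities in the encoded sequence are permitted only at descents of $\sigma$, while ascents force strict inequality, and an analogous substitution produces
$$\Omega^\circ_\Pi(n) = \sum_\sigma \binom{n+\mathrm{des}(\sigma)}{d},$$
also a polynomial of degree $d$ in $n$. The reciprocity is then term-by-term: the binomial identity gives
$$\binom{-n+d-1-\mathrm{des}(\sigma)}{d} = (-1)^d \binom{n+\mathrm{des}(\sigma)}{d},$$
so $(-1)^d \Omega_\Pi(-n) = \Omega^\circ_\Pi(n)$ summand by summand.

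The main technical point is setting up the bijection with tie-breaking carefully and verifying that reversing the convention exactly swaps the roles of descents and ascents in the sequence constraints, so that the binomial indices align as needed; once this is in place the reciprocity reduces to a mechanical application of the classical binomial identity.
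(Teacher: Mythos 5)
The paper itself gives no proof of this statement—it is quoted from Stanley with a citation—so the only comparison is with the classical argument, and yours is exactly that: the standard $P$-partition decomposition of (weak, resp.\ strict) order-preserving maps over linear extensions, giving $\Omega_\Pi(n)=\sum_\sigma\binom{n+d-1-\mathrm{des}(\sigma)}{d}$ and $\Omega^\circ_\Pi(n)=\sum_\sigma\binom{n+\mathrm{des}(\sigma)}{d}$, followed by the negation identity for binomial coefficients. Your outline is correct, including the formulas and the term-by-term reciprocity; the one step you defer (checking that, with the reversed tie-breaking, equal values can only occur along a block that is decreasing in $\tau$, hence only between incomparable elements, so the inverse map lands on strict order-preserving maps and the decomposition is a genuine bijection) does go through and is the standard verification.
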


 The \emph{order polytope} $O_{\Pi}$ corresponding to a poset $\Pi$ is defined as  
  $$O_{\Pi}=\{x\in \ensuremath{\mathbb{R}}^d: 0\le x_i \le 1 \text{ for all $i$, and }x_i \le x_j \text{ whenever }p_i \le_{\Pi} p_j \}.$$
\begin{theorem}[Stanley, \cite{stanleyOP}]\label{stanley}
The polynomials $\Omega_\Pi(n)$ and $\Omega^{\circ}_{\Pi}(n)$  are related to the Ehrhart polynomial of the order polytope $O_\Pi$ by the formulas $$\Lp_{O_\Pi}(n)=\Omega_\Pi(n+1),$$ $$\Lp_{O^{\circ}_\Pi}(n)=\Omega^{\circ}_\Pi(n-1).$$ 
\end{theorem}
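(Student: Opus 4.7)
The plan is to prove each identity by direct unraveling of definitions, with the only substantive step being the description of the interior $O_\Pi^\circ$.

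For the first identity $\Lp_{O_\Pi}(n)=\Omega_\Pi(n+1)$, I would start by scaling the defining inequalities. A point $y\in \R^d$ lies in $nO_\Pi$ if and only if $0\le y_i\le n$ for every $i$ and $y_i\le y_j$ whenever $p_i\le_\Pi p_j$. Hence a lattice point $y\in nO_\Pi\cap \Z^d$ is exactly the data of a weak order-preserving map $\phi\colon \Pi\to\{0,1,\ldots,n\}$. The assignment $\phi\mapsto \phi+1$ is a bijection between such maps and weak order-preserving maps $\Pi\to\{1,\ldots,n+1\}$, so the number of lattice points is $\Omega_\Pi(n+1)$.

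For the second identity $\Lp_{O^{\circ}_\Pi}(n)=\Omega^{\circ}_\Pi(n-1)$, the key point is to identify $O_\Pi^{\circ}$ with the set of $x\in\R^d$ satisfying the strict inequalities $0<x_i<1$ for every $i$ and $x_i<x_j$ whenever $p_i<_\Pi p_j$. First I would check that $O_\Pi$ is full-dimensional by exhibiting an open ball inside it, for example around a strict order-preserving embedding of $\Pi$ into $(0,1)$. Given this, a general fact about full-dimensional polytopes asserts that the topological interior is precisely the locus where all defining (facet) inequalities hold strictly; the cover relations together with $0\le x_i$ at minimal elements and $x_i\le 1$ at maximal elements give such an irredundant description of $O_\Pi$, and by transitivity the strict versions at minimal/maximal elements propagate to give $0<x_i<1$ for all $i$ and $x_i<x_j$ for every $p_i<_\Pi p_j$ (not only covers). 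Lattice points of $nO_\Pi^{\circ}$ are then exactly strict order-preserving maps $\phi\colon \Pi\to\{1,\ldots,n-1\}$, yielding $\Omega^{\circ}_\Pi(n-1)$.

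The main obstacle, such as it is, lies in this description of $O_\Pi^\circ$: one must be a bit careful because many of the inequalities $x_i\le x_j$ appearing in the definition are redundant (e.g.\ those coming from non-cover comparable pairs), so the equality ``interior $=$ locus of strict inequalities'' needs the non-redundant facet description, or else a direct argument that a point where every \emph{defining} strict inequality holds is indeed interior, which follows from showing that on any such point one can perturb each coordinate independently by a small amount without leaving $O_\Pi$.
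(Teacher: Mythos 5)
Your proof is correct. The paper itself gives no argument for this statement --- it is quoted from Stanley's order polytope paper --- and your direct verification is the standard one: lattice points of $nO_\Pi$ are weak order-preserving maps to $\{0,\ldots,n\}$, shifted to $\{1,\ldots,n+1\}$, and lattice points of the interior of $nO_\Pi$ are strict order-preserving maps to $\{1,\ldots,n-1\}$. Your care over the description of $O_\Pi^\circ$ is appropriate and your resolution works; note also that the redundancy worry is harmless in both directions, since a point satisfying every (finitely many) defining inequality with positive slack is interior, while an interior point satisfies every valid inequality with nonzero normal strictly, so one does not strictly need the irredundant facet description.
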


 Let $G=(V,E)$ be a graph with vertices $V=\{v_1, \ldots, v_d\}$. The \emph{chromatic polynomial} $\chi_G(n)$ counts the maps $\phi:V\rightarrow \{1,\ldots, n\}$ such that $\phi(v_i)\neq \phi(v_j)$ whenever $v_iv_j$ is an edge of $G$. An \emph{orientation} of $G$ is given by a subset $\rho\subseteq E$ of the edges of $G$, so that an edge $v_iv_j\in E$ with $i<j$ is oriented from $v_i$ to $v_j$ if $\{v_i, v_j\}\notin \rho$ or from $v_j$ to $v_i$ if $\{v_i, v_j\}\in \rho$.
 
 An \emph{oriented path} is a sequence of vertices $v_{a_1}, \ldots, v_{a_k}$ so that there is an oriented edge from $v_{a_i}$ to $v_{a_{i+1}}$. An oriented cycle is an oriented path so that $v_{a_1}=v_{a_k}$. An \emph{acyclic orientation} of a graph $G$ is an orientation of $G$ without oriented cycles. Each acyclic orientation $\rho$ of $G$ defines a poset $\Pi_{\rho}$ on the vertex set $V$, where $v_i\le_{\rho} v_j$ if there is an oriented path $v_{a_1}, \ldots, v_{a_k}$ from $v_i=v_{a_1}$ to $v_j=v_{a_k}$. Since $\rho$ is acyclic, the relationship $\le_\rho$ is transitive and antisymmetric. It is also reflexive if we admit oriented paths of length zero (with $k=1$).

The following is a description of the chromatic polynomials in terms of acyclic orientations.
\begin{proposition}[Stanley, \cite{stanleyAO}]\label{chromatic as ao} The chromatic polynomial $\chi_G(n)$ of a graph $G$ is the sum of the strict order polynomials for all acyclic orientations $\rho$ of $G$
$$\chi_G(n)=\sum_{\rho\text{ acyclic}}\Omega^{\circ}_{\Pi_{\rho}}(n).$$
\end{proposition}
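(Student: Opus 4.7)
The plan is to partition the set of proper $n$-colorings of $G$ according to a natural acyclic orientation they induce. Given a proper coloring $\phi:V\to\{1,\ldots,n\}$, I would define an orientation $\rho_\phi$ of $G$ by orienting each edge $v_iv_j$ from the endpoint of smaller $\phi$-value to the endpoint of larger $\phi$-value; this is unambiguous because $\phi(v_i)\neq \phi(v_j)$ on every edge. Along any oriented path in $\rho_\phi$ the values of $\phi$ strictly increase, so there are no oriented cycles and $\rho_\phi$ is acyclic.

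Next I would fix an acyclic orientation $\rho$ and identify the fiber $\{\phi : \rho_\phi=\rho\}$ with the set of strict order-preserving maps $\Pi_\rho\to\{1,\ldots,n\}$. If $\rho_\phi=\rho$, then for every oriented edge $u\to w$ of $\rho$ we have $\phi(u)<\phi(w)$; iterating along an oriented path from $u$ to $w$ gives $\phi(u)<\phi(w)$ whenever $u<_\rho w$, so $\phi$ is a strict order-preserving map on $\Pi_\rho$. Conversely, any such strict order-preserving $\phi$ satisfies $\phi(u)<\phi(w)$ on every oriented edge of $\rho$ (via the length-$1$ path), which in particular makes $\phi$ a proper coloring of $G$ and forces $\rho_\phi$ to agree with $\rho$ on each edge.

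Summing over acyclic orientations and using the definition of $\Omega^{\circ}_{\Pi_\rho}$ then yields
$$\chi_G(n)=\sum_{\phi\text{ proper}}1=\sum_{\rho\text{ acyclic}}\;\sum_{\phi\,:\,\rho_\phi=\rho}1=\sum_{\rho\text{ acyclic}}\Omega^{\circ}_{\Pi_\rho}(n),$$
as claimed. The most delicate step is the identification of each fiber with a set of strict order-preserving maps: since $\le_\rho$ is defined as the transitive closure of the oriented-edge relation, one has to verify that a strict inequality imposed on every oriented edge already propagates to every comparable pair in $\Pi_\rho$, which is the content of iterating $\phi(u)<\phi(w)$ along an oriented path. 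Once that identification is in place, the proposition follows purely by rearranging the sum over $\phi$ as a double sum over $\rho$ and over $\phi$ with $\rho_\phi=\rho$.
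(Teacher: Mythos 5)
Your argument is correct and complete: partitioning the proper $n$-colorings by the induced orientation, checking acyclicity via the strict increase of $\phi$ along oriented paths, and identifying each fiber with the strict order-preserving maps of $\Pi_\rho$ is exactly the standard proof of this result. The paper itself offers no proof, citing Stanley's original article instead, and your argument is essentially the one behind that citation, so there is nothing to reconcile or add.
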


\section{Ehrhart $h^*$-vectors and Stapledon Decomposition}

A \emph{triangulation} of a polytope $P$ is a collection of simplices so that their union is $P$ and the intersection of two simplices is a face on the boundary of both simplices (maybe empty). A triangulation of $P$ is \emph{unimodular} if all simplices have integer vertices and minimal volume $1/d!$.
Any order polytope has a unimodular triangulation  by subdividing it through all hyperplanes of the form $x_i=x_j.$ Each of these simplices is an order polytope by itself, corresponding to a poset that is a total order on the $d$ elements.

Let $P$ be a lattice $d$-polytope with Ehrhart series $\Ehr_P(z)=\dfrac{h^*(z)}{(1-z)^{d+1}}.$ 
If $P$ has a unimodular triangulation $T$, it is known that the $h^*$ polynomial can be obtained from the $f$-vector of $T$. If $f_i$ counts the number of $i$-dimensional faces of $T$ (and set $f_{-1}=1$ for the empty face), we construct the $f$-polynomial 
$$f_T(z)=\sum_{i=0}^{d+1} f_{i-1}z^i.$$ 
We define the $h$-polynomial of $T$ to be 
\begin{equation}\label{h}
h_T(z)=(1-z)^{d+1}f_T\left(\frac{z}{1-z}\right).
\end{equation}

Then $h^*(z)=h_T(z)$ is precisely the $h$-polynomial for any unimodular triangulation $T$ of $P$ (see Betke--McMullen \cite{betkemcmullen}).

The following is known as the Ehrhart--Macdonald Reciprocity for Ehrhart series. A proof can be found, e.\ g., in \cite[Theorem 4.4]{beckrobins}. 
\begin{theorem}[\cite{macdonald}]\label{EMreciprocity}
If $P$ is a lattice $d$-polytope, then $\Ehr_{P^{\circ}}(z)=(-1)^{d+1}\Ehr_P\left(\frac{1}{z}\right).$
\end{theorem}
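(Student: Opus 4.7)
My plan is to derive the series identity from the polynomial-level Ehrhart reciprocity together with a formal power series identity. Polynomial reciprocity asserts that $\Lp_{P^{\circ}}(n) = (-1)^d \Lp_P(-n)$ for every positive integer $n$. I would establish this first on a unimodular $d$-simplex $\Delta$ by the explicit identification $\Lp_\Delta(n) = \binom{n+d}{d}$ and $\Lp_{\Delta^{\circ}}(n) = \binom{n-1}{d}$ (the second coming from the substitution $y_i = x_i - 1$ in the defining inequalities), and then pass to a general lattice polytope by taking a lattice triangulation and using inclusion--exclusion on the face poset, coupled with a half-open decomposition so that shared boundary faces are counted exactly once. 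This is the route followed in \cite[Theorem 4.4]{beckrobins}.

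Once polynomial reciprocity is granted, the series identity follows from the algebraic fact that, for any polynomial $p$ of degree at most $d$,
$$\sum_{n \geq 0} p(n) z^n \;+\; \sum_{n \geq 1} p(-n) z^{-n} \;=\; 0$$
in the ring of rational functions. By linearity it suffices to verify this for $p(n) = \binom{n+d}{d}$, where the first sum equals $(1-z)^{-(d+1)}$, while the identity $\binom{-n+d}{d} = (-1)^d \binom{n-1}{d}$ together with a geometric-series reindexing gives $(-1)^d z^{-(d+1)} (1-z^{-1})^{-(d+1)} = -(1-z)^{-(d+1)}$ for the second.

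To finish I substitute $z \mapsto 1/z$ in the identity above, taken with $p = \Lp_P$, obtaining $\sum_{n \geq 1} \Lp_P(-n) z^n = -\Ehr_P(1/z)$. Multiplying by $(-1)^d$ and using polynomial reciprocity yields
$$\Ehr_{P^{\circ}}(z) \;=\; \sum_{n \geq 1} \Lp_{P^{\circ}}(n) z^n \;=\; (-1)^d \sum_{n \geq 1} \Lp_P(-n) z^n \;=\; (-1)^{d+1} \Ehr_P(1/z),$$
as claimed.

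The main obstacle is polynomial reciprocity for polytopes that do not admit a unimodular triangulation: in that setting one cannot reduce immediately to the binomial identity, and one must instead dissect the cone over $P$ using half-open simplices or appeal to Brion-type cone identities. Everything after that step is routine formal algebra, relying only on the fact that $\Lp_P$ is a polynomial of degree exactly $d$, which is itself Ehrhart's theorem.
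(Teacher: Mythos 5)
The paper does not actually prove this statement: it is quoted from Macdonald with a pointer to \cite[Theorem 4.4]{beckrobins}, so the comparison is with that standard textbook proof, which your outline essentially follows (and explicitly cites). The second half of your argument is correct and complete: granting the polynomial reciprocity $\Lp_{P^{\circ}}(n)=(-1)^d\Lp_P(-n)$, the rational-function identity $\sum_{n\ge 0}p(n)z^n+\sum_{n\ge 1}p(-n)z^{-n}=0$ for polynomials $p$ of degree at most $d$ (verified on the basis $\binom{n+k}{k}$, $0\le k\le d$, not just the single top-degree binomial you name, though this is a cosmetic point) does yield $\sum_{n\ge1}\Lp_P(-n)z^n=-\Ehr_P\left(\frac 1z\right)$ and hence $\Ehr_{P^{\circ}}(z)=(-1)^{d+1}\Ehr_P\left(\frac 1z\right)$; here one only needs $\Lp_P(0)=1$ and that $\Lp_P$ is a polynomial.

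The genuine gap is in the first half. Your route to polynomial reciprocity — prove it for unimodular simplices by the binomial computation, then triangulate and use inclusion--exclusion with half-open pieces — only works for polytopes that admit unimodular triangulations, and in dimension $d\ge 3$ these need not exist (Reeve's tetrahedra have no lattice points beyond their vertices, so every lattice triangulation of them consists of a single non-unimodular simplex, and the counts $\binom{n+d}{d}$, $\binom{n-1}{d}$ no longer apply). A lattice triangulation of a general $P$ consists of arbitrary lattice simplices, so the essential case is reciprocity for a non-unimodular lattice simplex, which is exactly Stanley's reciprocity for simplicial cones via the tiling of the cone over the simplex by translates of its half-open fundamental parallelepiped. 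You correctly identify this as ``the main obstacle'' and name the right tool, but you do not carry it out, and it is the substance of the theorem rather than a technicality; note also that in \cite{beckrobins} the logical order is the reverse of yours (cone reciprocity first, then this series identity, then the polynomial statement as a corollary), so your reordering does not let you bypass the cone lemma. As written, your argument is complete only for polytopes with unimodular triangulations — which would cover the order polytopes used later in this paper, but not the general statement needed for Theorem \ref{th3}.
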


\begin{corollary}\label{reversing}
If $P$ is a lattice $d$-polytope and $\Ehr_{P^{\circ}}(z)=\dfrac{h_P(z)}{(1-z)^{d+1}}$, then $h_P(z)=z^{d+1}h^*\left(\frac{1}{z}\right)$ is a monic polynomial of degree $d+1$.
\end{corollary}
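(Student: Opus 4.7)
The plan is to derive the formula for $h_P$ directly from Ehrhart--Macdonald reciprocity (Theorem \ref{EMreciprocity}) and then read off the degree and leading coefficient.

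First I would substitute $1/z$ into the given form $\Ehr_P(z)=h^*(z)/(1-z)^{d+1}$ and clear denominators: the factor $(1-1/z)^{d+1}$ equals $(-1)^{d+1}(1-z)^{d+1}/z^{d+1}$, so
\[
\Ehr_P\!\left(\tfrac{1}{z}\right)=\frac{h^*(1/z)}{(1-1/z)^{d+1}}=\frac{(-1)^{d+1}z^{d+1}h^*(1/z)}{(1-z)^{d+1}}.
\]
Multiplying by $(-1)^{d+1}$ as in Theorem \ref{EMreciprocity} cancels the sign and gives
\[
\Ehr_{P^\circ}(z)=\frac{z^{d+1}h^*(1/z)}{(1-z)^{d+1}},
\]
so by the definition $\Ehr_{P^\circ}(z)=h_P(z)/(1-z)^{d+1}$ we must have $h_P(z)=z^{d+1}h^*(1/z)$.

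Next I would check the degree and leading coefficient. Writing $h^*(z)=\sum_{i=0}^{s}h^*_iz^i$ with $s\le d$, the identity $h_P(z)=z^{d+1}h^*(1/z)=\sum_{i=0}^{s}h^*_iz^{d+1-i}$ shows that $h_P$ is a polynomial whose nonzero terms have degrees between $d+1-s$ and $d+1$, with leading coefficient $h^*_0$. Finally, $h^*_0=h^*(0)=\Ehr_P(0)=\Lp_P(0)=1$, since the constant term of the Ehrhart series is the number of lattice points in $0\cdot P=\{0\}$. Hence $h_P$ is monic of degree exactly $d+1$.

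There is no real obstacle here; the statement is essentially a restatement of Ehrhart--Macdonald reciprocity in the $h^*$-presentation, and the only thing to verify beyond the direct algebraic manipulation is the normalization $h^*(0)=1$, which is immediate from $\Lp_P(0)=1$.
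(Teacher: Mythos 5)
Your argument is correct and follows essentially the same route as the paper: apply Ehrhart--Macdonald reciprocity, rewrite $\left(1-\frac{1}{z}\right)^{d+1}$ to absorb the sign, identify $h_P(z)=z^{d+1}h^*\left(\frac{1}{z}\right)$ as the reversal of $h^*$, and use $h^*_0=1$ to conclude monicity and degree $d+1$. Your explicit justification of $h^*_0=1$ via the constant term of the Ehrhart series is a slightly more detailed version of the paper's appeal to the same fact.
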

\begin{proof}
By Theorem \ref{EMreciprocity},  $\Ehr_{P^{\circ}}(z)=\dfrac{h^*\left(\frac{1}{z}\right)}{\left(\frac 1z- 1\right)^{d+1}}=\dfrac{z^{d+1}h^*\left(\frac{1}{z}\right)}{(1-z)^{d+1}}.$  
Then $h_P(z)=z^{d+1}h^*\left(\frac{1}{z}\right)$ is obtained by reversing the coefficients of the $h^*$-polynomial. Since  $h^*_0=1$ for any polytope $P$, then $h_P(z)$ is monic and $\deg(h_P)=d+1$.
 \end{proof}

From \cite[Lemma 2.3]{stapledon1} we know the following.
\begin{proposition}\label{abdecomp} Let $h(z)$ be any polynomial of  degree $\deg(h)=s\le d$, and $l=d+1-s$. Then there are unique symmetric polynomials $a(z)$ and $b(z)$ with $a(z)=z^{d}a\left(\frac{1}{z}\right)$ and $b(z)=z^{s-1}b\left(\frac{1}{z}\right)$ so that $h(z) = a(z) + z^lb(z).$  
\end{proposition}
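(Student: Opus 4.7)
My plan is to derive $a$ and $b$ explicitly by coupling the given equation with its ``reverse'' and then solving the resulting $2\times 2$ system. The key algebraic identity driving the entire argument is $d = l + s - 1$, coming directly from $l = d+1-s$.

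First I would apply $z \mapsto 1/z$ to $h(z) = a(z) + z^l b(z)$ and multiply by $z^d$. The symmetry conditions $z^d a(1/z) = a(z)$ and $z^{s-1} b(1/z) = b(z)$, together with the cancellation $d - l - (s-1) = 0$, combine to produce the companion identity
$$z^d h(1/z) \;=\; a(z) + b(z).$$
Subtracting the original equation eliminates $a$ and yields $(z^l - 1)\,b(z) = h(z) - z^d h(1/z)$, so that
$$b(z) \;=\; \frac{h(z) - z^d h(1/z)}{z^l - 1}, \qquad a(z) \;=\; h(z) - z^l b(z).$$
These formulas immediately establish uniqueness, since any valid decomposition is pinned down by them.

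Existence then has two parts. The required symmetries $z^d a(1/z) = a(z)$ and $z^{s-1} b(1/z) = b(z)$ will follow from a short direct calculation: substituting $z \mapsto 1/z$ in each explicit formula and multiplying by $z^d$ or $z^{s-1}$, the identity $d = l + s - 1$ makes the reversed and original rational expressions collapse onto each other. The relation $h(z) = a(z) + z^l b(z)$ holds by construction.

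The hard part will be verifying that the numerator $h(z) - z^d h(1/z)$ is genuinely divisible by $z^l - 1$, so that $b$ is a polynomial and not merely a rational function. Concretely this reduces to showing $h(\zeta) = \zeta^d h(1/\zeta)$ at every $l$-th root of unity $\zeta$. The root $\zeta = 1$ is immediate, but the nontrivial roots of unity are the delicate point and are where the hypothesis $\deg h = s$ must be invoked nontrivially; this divisibility check is the essential technical step on which the whole proposition rests.
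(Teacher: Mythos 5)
Your uniqueness argument and the formal symmetry checks are sound (they only use $d-l=s-1$), but the step you yourself isolate as the crux --- divisibility of $h(z)-z^dh\left(\frac{1}{z}\right)$ by $z^l-1$ --- is not proved, and for the statement read literally it cannot be proved, because that statement is false whenever $l\ge 2$. Take $h(z)=1$ with $d=1$, so $s=0$ and $l=2$: the symmetry $b(z)=z^{-1}b\left(\frac{1}{z}\right)$ forces $b=0$, and then $h=a$ with $a(z)=za\left(\frac{1}{z}\right)$ would force $a_0=a_1$, which is impossible; equivalently, $h(z)-z\,h\left(\frac{1}{z}\right)=1-z$ is not divisible by $z^2-1$ (it does not vanish at $\zeta=-1$). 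The hypothesis $\deg h=s$ gives no leverage at the nontrivial $l$-th roots of unity, so the ``delicate point'' you defer is exactly where the argument breaks. What is actually true --- and what the paper uses and whose coefficient formulas (\ref{abeq1})--(\ref{abeq2}) it copies from Stapledon --- is the decomposition of $(1+z+\cdots+z^{l-1})h(z)$, as in Theorem \ref{stapledon}; Proposition \ref{abdecomp} as printed omits that factor, and it is only correct as printed in the case $l=1$, which is the case the paper later invokes (``$s=d+1$ playing the role of $d$ and $l=1$'').

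Once the factor is restored, your method closes immediately and is a legitimate alternative to the paper's treatment (the paper does not prove the lemma; it cites Stapledon and records the explicit formulas $a_i=h_0+\cdots+h_i-h_d-\cdots-h_{d-i+1}$, $b_i=-h_0-\cdots-h_i+h_s+\cdots+h_{s-i}$, whose verification is a telescoping check). Indeed, set $H(z)=(1+z+\cdots+z^{l-1})h(z)$, a polynomial of degree exactly $d$, so $z^dH\left(\frac{1}{z}\right)$ is a polynomial; at a nontrivial $l$-th root of unity $\zeta$ both $H(\zeta)$ and $\zeta^dH\left(\frac{1}{\zeta}\right)$ vanish because $1+\zeta+\cdots+\zeta^{l-1}=0$, and at $\zeta=1$ the difference $H(1)-H(1)$ vanishes trivially; hence $z^l-1$ divides $H(z)-z^dH\left(\frac{1}{z}\right)$, your formula $b(z)=\bigl(H(z)-z^dH\left(\tfrac{1}{z}\right)\bigr)/(z^l-1)$ defines a polynomial, and your symmetry computations go through verbatim. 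So the proposal has a genuine gap on two counts: the essential divisibility is asserted rather than established, and as aimed at the literal statement it is aimed at a false target; fixing it requires either restricting to $l=1$ or reinstating the factor $(1+z+\cdots+z^{l-1})$.
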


Moreover,  \cite{stapledon1} gives explicit formulas for the coefficients of the polynomials $a(z)=\sum_{i=0}^d a_iz^i$ and $b(z)=\sum_{i=0}^{s-1} b_iz^i$. If $h(z)=\sum_{i=0}^d h_iz^i$, then 
\begin{align}
a_i &=h_0+h_1+\cdots+h_i-h_d-\cdots-h_{d-i+1},\label{abeq1}\\
b_i &=-h_0-h_1-\cdots-h_i+h_s+\cdots+h_{s-i}.\label{abeq2}
\end{align}

It is easy to see that the polynomials $a(z)$ and $b(z)$ obtained this way are symmetric, with $a_i=a_{d-i}$ and $b_i=b_{s-1-i}$. They have integer coefficients if $h(z)$ has integer coefficients.

\begin{theorem}[Stapledon \cite{stapledon1}]\label{stapledon}
Let $h^*(z)$ be the numerator polynomial of the Ehrhart series of a $d$-dimensional lattice polytope $P$. Let $s=\deg(h^*)$ and $l=d+1-s$. Consider the symmetric polynomials $a(z)=z^da\left(\frac{1}{z}\right)$ and $b(z)=z^{s-1}b\left(\frac{1}{z}\right)$  such that $$(1 + z + \cdots + z^{l-1})h^*(z) = a(z) + z^lb(z)$$ 
which exist and are unique due to Proposition \ref{abdecomp}. Then  $a(z)$ and $b(z)$ have nonnegative integer coefficients. 
\end{theorem}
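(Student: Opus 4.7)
The plan is to first establish the decomposition formally and then tackle the nonnegativity. Applying Proposition \ref{abdecomp} with $h(z)$ replaced by $H(z) := (1+z+\cdots+z^{l-1})h^*(z)$ (a polynomial of degree exactly $d$, since $h^*_s>0$) immediately gives existence, uniqueness, symmetry and integrality of $a(z)$ and $b(z)$. The substantive content of the theorem is therefore the nonnegativity of their coefficients.

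To obtain explicit formulas I would substitute $H_j = \sum_{k=\max(0,j-l+1)}^{\min(s,j)} h^*_k$ into \eqref{abeq1}--\eqref{abeq2}, which expresses each $a_i$ and $b_i$ as a signed partial sum of the $h^*_k$. After cancellation the claim $a_i,b_i \ge 0$ reduces to specific comparisons between two sums of $h^*$-coefficients, generalising the inequalities of Hibi and Stanley mentioned after the theorem statement.

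For the nonnegativity I would first reduce to the case that $P$ has a unimodular triangulation $T$, so that $h^*(z)=h_T(z)$ by Betke--McMullen. A key observation is that $1+z+\cdots+z^{l-1}$ is itself the $h$-polynomial of the boundary of an $(l-1)$-simplex, hence $H(z)$ is the $h$-polynomial of the join $T*\partial\Delta^{l-1}$. The approach is then to construct an explicit partitioning (for example a shelling, or a half-open decomposition) of this join into two pieces whose contributions to the $h$-polynomial are exactly $a(z)$ and $z^l b(z)$, with each piece having nonnegative $h$-vector because it is realised as the $h$-vector of an honest simplicial (or relative simplicial) complex. The parameter $l=d+1-s$ should enter as the codimension of the ``interior core'' of $T$, reflecting via Ehrhart--Macdonald reciprocity the smallest positive dilate of $P$ that contains an interior lattice point. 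The symmetry of $a(z)$ would come from the sphere-like nature of this core (a Dehn--Sommerville relation), and the symmetry of $b(z)$ from reciprocity applied to the complementary piece.

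The main obstacle is constructing the right combinatorial model and showing that its $h$-decomposition matches the algebraic one prescribed by Proposition \ref{abdecomp}. Even granting a unimodular triangulation, it is not a priori obvious how to identify a canonical partition of $T*\partial\Delta^{l-1}$ whose shelling numbers split into a symmetric part of degree $d$ and a shifted symmetric part supported in degrees $\ge l$. For polytopes without a unimodular triangulation, a further step is needed: this could be handled either algebraically, using Ehrhart--Macdonald reciprocity (Theorem \ref{EMreciprocity}) together with Stanley's nonnegativity of $h^*$ on the level of the semigroup algebra of the cone over $P$, or geometrically, by realising $P$ as a deformation limit of polytopes that do admit such triangulations and using the fact that the inequalities $a_i,b_i\ge 0$ are closed conditions on the $h^*$-vector.
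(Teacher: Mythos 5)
First, a point of orientation: the paper does not prove Theorem~\ref{stapledon} at all --- it is quoted verbatim from \cite{stapledon1} and used as a black box --- so your proposal has to be measured against Stapledon's own argument rather than anything in this text. Your opening step is essentially fine, with one wrinkle worth flagging: applying Proposition~\ref{abdecomp} to $H(z)=(1+z+\cdots+z^{l-1})h^*(z)$, which has degree $d$, would give a decomposition $H=a+zb'$ with $b'(z)=z^{d-1}b'\left(\frac 1z\right)$, which for $l>1$ is not the stated form $H=a+z^lb$ with $b(z)=z^{s-1}b\left(\frac 1z\right)$; to get the latter you need a short extra argument (e.g.\ via the operator $p(z)\mapsto z^dp\left(\frac 1z\right)$, or the explicit formulas (\ref{abeq1})--(\ref{abeq2})). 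But existence and uniqueness are the easy part, and the real issue lies in the nonnegativity.

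There the proposal has two genuine gaps. The central one is that the partition of the join $T*\partial\Delta^{l-1}$ into two pieces whose $h$-contributions are $a(z)$ and $z^lb(z)$ is never constructed: you state that such a decomposition ``should'' exist and explicitly acknowledge you do not know how to produce it, yet this is precisely the entire content of the theorem, so what you have is a research plan rather than a proof. The second gap is the reduction to the unimodular case: in dimension $\ge 3$ most lattice polytopes admit no unimodular triangulation, and neither fallback you offer repairs this. The ``deformation limit'' idea is vacuous because $h^*$-vectors form a discrete set --- the inequalities being closed conditions helps only if you can approximate the given $h^*$-vector by $h^*$-vectors of unimodularly triangulable polytopes, which you cannot in general --- and Stanley's nonnegativity of $h^*$ via the semigroup algebra gives $h^*_i\ge 0$ but not the finer statements $a_i,b_i\ge 0$. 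For comparison, Stapledon's actual proof needs no unimodularity: he takes an arbitrary lattice triangulation of the boundary of $P$, uses the Betke--McMullen expression of $h^*$ as a sum over simplices of box polynomials times $h$-polynomials of links, and obtains the two symmetric pieces from reciprocity of box polynomials together with the Dehn--Sommerville relations for the links (which are spheres), with nonnegativity visible termwise. Your heuristic that the factor $1+z+\cdots+z^{l-1}$ should be absorbed combinatorially (you realize it as the $h$-polynomial of $\partial\Delta^{l-1}$) is close in spirit to how that factor enters Stapledon's bookkeeping, but without the box-point decomposition the argument does not close, even for unimodularly triangulable $P$.
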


\section{Polynomial Decompositions for Open Lattice Polytopes}

\begin{proof}[Proof of Theorem \ref{th3}]
Consider the Stapledon decomposition of the polytope $P$ given by symmetric polynomials 
$a^*(z)=z^da^*\left(\frac{1}{z}\right)$, $b^*(z)=z^{s-1}b^*\left(\frac{1}{z}\right)$ so that
\begin{equation}\label{eq1b}
(1+\cdots+z^{l-1})h^*(z)=a^*(z)+z^l b^*(z). 
\end{equation}
where $h^*(z)$ is the numerator polynomial of $\Ehr_P(z)$. By Theorem \ref{stapledon}, $a^*(z)$ and $b^*(z)$ have nonnegative coefficients. 

We also apply Theorem \ref{stapledon} to a polytope $\widehat P$ that is a pyramid over $P$ of height one. More precisely, take $\widehat P=\conv(\{0\}\cup \{(v,1):v \text{ vertex of }P \}).$ It is known that $P$ and $\widehat P$ have the same $h^*$ polynomial. (See for example \cite[Theorem 2.4]{beckrobins}.) 

Then we can find symmetric polynomials $\widehat a(z)=z^{d+1}\widehat a\left(\frac{1}{z}\right)$, $\widehat b(z)=z^{s-1}\widehat b\left(\frac{1}{z}\right)$ with nonnegative coefficients, so that
\begin{equation}\label{eq2b}
(1+\cdots+z^{l})h^*(z)=\widehat a(z)+z^{l+1} \widehat b(z). 
\end{equation}
Notice that  $b^*(z)=\widehat b(z),$ since the formula (\ref{abeq2}) does not depend on $d$, but only on $s$. Multiplying (\ref{eq1b}) by $z$ and subtracting it from (\ref{eq2b}) we find that $$h^*(z)=\widehat a(z)-z a^*(z).$$ Then by Corollary \ref{reversing}, $h_P(z)$ is obtained by reversing the $h^*(z)$ numerator polynomial, 

\begin{equation}\label{eq3b}
h_P(z)=z^{d+1}h^*\left(\tfrac 1z\right)=z^{d+1}\widehat a\left(\tfrac 1z\right)-z^{d} a^*\left(\tfrac 1z\right)=\widehat a(z)- a^*(z). 
\end{equation}
If we take $a(z)=\widehat a(z)$ and $b(z)=a^*(z)$ we get the desired decomposition. Notice that $a(z)=z^{d+1}a\left(\frac{1}{z}\right)$ and $b(z)=z^{d}b\left(\frac{1}{z}\right)$, and both polynomials have nonnegative coefficients.
\end{proof}



\begin{proof}[Proof of Theorem \ref{th1}]
Given a poset $\Pi$ we are interested in the decomposition $h_{\Pi}(z) = a_{\Pi}(z) + zb_{\Pi}(z),$ with $a(z)=z^{d+1}a\left(\frac{1}{z}\right)$ and $b(z)=z^{d}b\left(\frac{1}{z}\right)$ of the numerator polynomial $h_{\Pi}(z)$ of the Ehrhart series of the open polytope $O^{\circ}_{\Pi}$. This decomposition is unique due to Proposition \ref{abdecomp}. By Corollary \ref{reversing}, $h_{\Pi}(z)$ is obtained by $h_{\Pi}(z)=z^{d+1}h^*\left(\frac{1}{z}\right),$ where $h^*(z)$ is the numerator polynomial of $\Ehr_{O_{\Pi}}(z)$.

For this we need to apply Theorem \ref{th3} to a polytope that is a projection of $O_\Pi$.
 Denote by $\mu$ the orthogonal projection from $\ensuremath{\mathbb{R}}^d$ to the $(d-1)$-dimensional space $H_0=\{x\in \ensuremath{\mathbb{R}}^d: \sum_{i=1}^d x_i=0\}$ and let $e_i$ be the canonical basis of $\ensuremath{\mathbb{R}}^d$ (for $1\le i\le d$).  The integer lattice $\ensuremath{\mathbb{Z}}^d$ is projected orthogonally to the lattice $L\subset H_0$ generated by $\mu(e_1), \ldots, \mu(e_{d-1})$. Note that $\mu(e_d)=-\sum_{i=1}^{d-1} \mu(e_i)$. Consider the projected polytope $\mu(O_{\Pi})\subseteq H_0$. It is a lattice polytope in $H_0$ so that it holds the following. 

\begin{lemma}
 The Ehrhart series of $O_\Pi$ and $\mu(O_\Pi)$ have the same numerator polynomial $h^*$, that is,  $$\Ehr_{\mu(O_\Pi)}(z)=\frac{h^*(z)}{(1-z)^d}.$$
 \end{lemma}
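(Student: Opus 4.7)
The plan is to establish the stronger identity $\Ehr_{O_\Pi}(z) = \frac{1}{1-z}\Ehr_{\mu(O_\Pi)}(z)$, from which the lemma follows immediately after substituting $\Ehr_{O_\Pi}(z) = h^*(z)/(1-z)^{d+1}$. The geometric intuition is that, since the order relations defining $O_\Pi$ are translation-invariant along the kernel $\mathbb{R}(1,\ldots,1)$ of $\mu$, the polytope $O_\Pi$ fibers over $\mu(O_\Pi)$ with line-segment fibers, and the integer points on these fibers contribute the extra factor of $1/(1-z)$ in the Ehrhart series.

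The key step is a fiber-wise lattice point count. For every admissible lattice point $y \in L$ I would define the invariant $\delta(y) := \max_i x_i - \min_i x_i$ using any preimage $x \in \mu^{-1}(y) \cap \mathbb{Z}^d$; since $x$ and $x + k(1,\ldots,1)$ have the same maximum minus minimum, $\delta(y)$ is independent of the chosen representative. Moreover the order constraints, being $(1,\ldots,1)$-invariant, are either satisfied by all integer preimages of $y$ or by none. A direct check against the box constraints $0 \le x_i \le n$ then shows that $y$ lies in $n\mu(O_\Pi)$ exactly when $n \ge \delta(y)$, and in that case the integer fiber $\mu^{-1}(y) \cap nO_\Pi \cap \mathbb{Z}^d$ contains precisely $n - \delta(y) + 1$ points, corresponding to the integer shifts along $(1,\ldots,1)$ that keep every coordinate in $[0,n]$.

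Summing over all admissible $y \in L$ and applying the geometric series identities $\sum_{n\ge \delta(y)}(n-\delta(y)+1)z^n = z^{\delta(y)}/(1-z)^2$ and $\sum_{n\ge \delta(y)}z^n = z^{\delta(y)}/(1-z)$, I would obtain
\begin{equation*}
\Ehr_{O_\Pi}(z) = \sum_{y}\frac{z^{\delta(y)}}{(1-z)^2}, \qquad \Ehr_{\mu(O_\Pi)}(z) = \sum_{y}\frac{z^{\delta(y)}}{1-z}.
\end{equation*}
Dividing these two identities gives $\Ehr_{O_\Pi}(z) = \Ehr_{\mu(O_\Pi)}(z)/(1-z)$, and the equality of numerator polynomials follows at once.

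The main obstacle is the careful bookkeeping around the fiber count $n - \delta(y) + 1$, together with the fact that every $y \in L \cap n\mu(O_\Pi)$ admits a lattice preimage inside $nO_\Pi$ rather than merely a real one. Both points reduce to the observation that $(1,\ldots,1) \in \mathbb{Z}^d$ lies in the kernel of $\mu$, so that $\mu^{-1}(y) \cap \mathbb{Z}^d$ is a full integer translate along the kernel; once this is noted, the rest is routine manipulation of geometric series.
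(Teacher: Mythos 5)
Your argument is correct, but it proceeds quite differently from the paper. The paper proves the lemma via triangulations: it takes the canonical unimodular triangulation $T$ of $O_\Pi$ (cut by the hyperplanes $x_i=x_j$), checks that $\mu$ sends each simplex to a simplex that is unimodular with respect to $L$ (because $(0,\ldots,0)$ and $(1,\ldots,1)$ both map to the origin while the remaining vertices project to a lattice basis), observes that combinatorially $f_T(z)=(1+z)f_{T_\mu}(z)$, and then invokes the Betke--McMullen identification of $h^*$ with the $h$-polynomial of a unimodular triangulation to conclude $h_T=h_{T_\mu}$. You instead prove the stronger series identity $(1-z)\Ehr_{O_\Pi}(z)=\Ehr_{\mu(O_\Pi)}(z)$ by a direct fiber count: the integer preimages of each $y\in L$ form a single coset $x_0+\Z(1,\ldots,1)$, the order constraints are constant along such a fiber, and for an admissible $y$ the fiber in $nO_\Pi$ is a lattice segment with integer endpoints containing exactly $n-\delta(y)+1$ lattice points, where $\delta(y)=\max_i x_i-\min_i x_i$; summing the geometric series then gives the claim. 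This checks out (the one point to make explicit is the definition of ``admissible'' and the fact that every $y\in L\cap n\mu(O_\Pi)$ is admissible with $\delta(y)\le n$, which follows since a real preimage in $nO_\Pi$ forces the order constraints on the whole fiber). Your route is more elementary: it needs neither the unimodular triangulation nor Betke--McMullen, only the observation that the fibers are lattice segments in the direction $(1,\ldots,1)$, and it yields the cleaner statement $\Ehr_{\mu(O_\Pi)}=(1-z)\Ehr_{O_\Pi}$ of which the lemma is an immediate corollary. What the paper's approach buys in exchange is the explicit unimodular triangulation $T_\mu$ of $\mu(O_\Pi)$ and the $f$- and $h$-vector bookkeeping, which the paper reuses later (regularity, shellability, and the triangulation appearing in the discussion of Conjecture 6.3), so the triangulation proof carries extra structural information beyond the equality of numerator polynomials.
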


\begin{proof}
Let $T$ be the triangulation of $O_\Pi$ obtained by subdividing it through all hyperplanes of the form $x_i=x_j.$ 
Notice that  all unimodular simplices $\Delta$ in $T$ are projected by $\mu$ to unimodular simplices in $H_0$ (with respect to the lattice $L$). This is because the vertices of any of such simplex $\Delta$ contain a basis of $\ensuremath{\mathbb{Z}}^d$, and therefore their projections must generate $L$ by integer linear combinations. The points $(0, \ldots,0)$ and $(1, \ldots,1)$  are vertices of each simplex $\Delta$ in $T$ and both are projected to the origin in $H_0$. The projection of all other $d-1$ vertices of $\Delta$ is then an integer basis of $L$ and $\mu(\Delta)$ is unimodular. Therefore we obtain a unimodular triangulation $T_\mu$ for $\mu(O_\Pi)$ that comes from projecting all simplices $\Delta$ in $T$ by $\mu$.

If $f_\mu(z)$ is the $f$-polynomial of $T_\mu$, then the $f$-polynomial of $T$ is $$f_T(z)=f_{T_\mu}(z)(1+z),$$ since combinatorially $T$ is a cone over $T_\mu$ where the projection of the origin is split into two vertices in $T$ that belong to all maximal simplices. 
Using  (\ref{h}) we find that
\begin{align*}
h^*(z)=h_T(z) &= (1-z)^{d+1}f_T\left(\frac{z}{1-z}\right)=(1-z)^{d+1}\left(1+\frac{z}{1-z}\right)f_{T_\mu}\left(\frac{z}{1-z}\right)\nonumber\\              
              &=(1-z)^{d+1}\left(\frac{1}{1-z}\right)f_{T_\mu}\left(\frac{z}{1-z}\right)=(1-z)^{d}f_{T_\mu}\left(\frac{z}{1-z}\right)= h_{T_\mu} (z).
\end{align*}
The last equality is due to the fact that $\dim T_\mu=d-1,$ and this dimension is used in the corresponding formula for $h_{T_\mu}$ from  (\ref{h}).
\end{proof}
This result can be also seen by noticing that the triangulations $T$ and $T_\mu$ are always regular and therefore shellable, and they have the same $h$-vector. In \cite{HPPS} it is explained why order polytopes are compressed, and therefore have regular unimodular triangulations, and also how these properties are preserved under the projection $\mu$. Projected order polytopes are also examples of alcoved polytopes. See \cite{postnikovAP} for more information, including a description of the $h^*$-vector of $O_\Pi$ using the number of descents of the linear extensions of $\Pi$. 

Now we are ready to describe the decomposition of $h_{\Pi}(z).$
First apply Theorem \ref{stapledon} to polytope $O_{\Pi}$. There are symmetric polynomials $a^*(z)$ and $b^*(z)$ with nonnegative coefficients and with $a^*(z)=z^da^*\left(\frac{1}{z}\right)$ and $b^*(z)=z^{s-1}b^*\left(\frac{1}{z}\right)$ so that
\begin{equation}\label{eq1}
(1+\cdots+z^{l-1})h^*(z)=a^*(z)+z^l b^*(z). 
\end{equation}
Applying  Theorem \ref{stapledon} this time to polytope $\mu(O_\Pi)$ we  find polynomials $a_1^*(z)$ and $b_1^*(z)$ with nonnegative coefficients, and with  $a_1^*(z)=z^{d-1}a_1^*\left(\frac{1}{z}\right)$ and $b_1^*(z)=z^{s-1}b_1^*\left(\frac{1}{z}\right)$ so that
\begin{equation}\label{eq2}
(1+\cdots+z^{l-2})h^*(z)=a^*_1(z)+z^{l-1} b^*_1(z).
\end{equation}
In this case the value of $s$ is the same, but $d$ and $l$ have to be reduced by 1 to use the same notation as with $O_\Pi$. Also we can see that $b^*(z)=b^*_1(z)$, since the formula (\ref{abeq2}) for the coefficients of the $b$ polynomials does not depend on $d$ but only on $s$. 
\begin{lemma}
 If we decompose $h_{\Pi}(z)=a_\Pi(z)+zb_\Pi(z)$ as the sum of two symmetric polynomials $a_\Pi$ and $b_\Pi$, with $a_\Pi(z)=z^{d+1}a_\Pi\left(\frac{1}{z}\right)$ and $b_\Pi(z)=z^{d}b_\Pi\left(\frac{1}{z}\right)$, then necessarily $b_\Pi(z)= a^*(z)$ and $a_\Pi(z)=-za^*_1(z)$.
\end{lemma}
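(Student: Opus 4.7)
The plan is to derive an explicit closed form for $h_\Pi(z)$ from the two Stapledon-type identities (\ref{eq1}) and (\ref{eq2}) and then invoke the uniqueness of the decomposition guaranteed by Proposition \ref{abdecomp}.

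The first step is to ``reverse'' equation (\ref{eq1}): substitute $z \mapsto 1/z$ and multiply through by $z^d$. Using the symmetries $a^*(z) = z^d a^*(1/z)$ and $b^*(z) = z^{s-1}b^*(1/z)$, the relation $s + l = d + 1$, and the identity $z^{d+1}h^*(1/z) = h_\Pi(z)$ from Corollary \ref{reversing}, everything collapses to
$$(1 + z + \cdots + z^{l-1})\,h_\Pi(z) \;=\; z^l\bigl(a^*(z) + b^*(z)\bigr).$$
Performing the same manipulation on (\ref{eq2}), while remembering that $\mu(O_\Pi)$ has dimension $d-1$ so $a_1^*$ is symmetric with respect to degree $d-1$, yields
$$(1 + z + \cdots + z^{l-2})\,h_\Pi(z) \;=\; z^l\bigl(a_1^*(z) + b_1^*(z)\bigr).$$

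Subtracting the second identity from the first, the telescoping on the left produces $z^{l-1}h_\Pi(z)$, while the observation $b^*(z) = b_1^*(z)$ (recorded just before the lemma) causes the $b$-terms on the right to cancel. Dividing by $z^{l-1}$ then gives
$$h_\Pi(z) \;=\; -z\, a_1^*(z) \;+\; z\cdot a^*(z).$$
A direct check from the symmetries of $a_1^*$ and $a^*$ shows that $-z\,a_1^*(z)$ is symmetric with respect to degree $d+1$ and $a^*(z)$ is symmetric with respect to degree $d$. Hence the displayed line is a valid decomposition of $h_\Pi(z)$ of the form $a_\Pi(z) + z b_\Pi(z)$ with exactly the prescribed symmetry types, so the uniqueness half of Proposition \ref{abdecomp} forces $a_\Pi(z) = -z\,a_1^*(z)$ and $b_\Pi(z) = a^*(z)$.

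The main obstacle is purely bookkeeping: carefully tracking how the symmetry exponents shift under the reciprocity $z\mapsto 1/z$ and under the change of ambient dimension from $d$ to $d-1$ when passing from $O_\Pi$ to $\mu(O_\Pi)$. Once the two reversed identities are on the same footing the cancellation and identification are immediate.
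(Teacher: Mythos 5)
Your proposal is correct and follows essentially the same route as the paper: combine the two Stapledon identities (\ref{eq1}) and (\ref{eq2}) using $b^*(z)=b_1^*(z)$, pass through the reversal $h_\Pi(z)=z^{d+1}h^*\left(\frac{1}{z}\right)$ of Corollary \ref{reversing} to reach $h_\Pi(z)=za^*(z)-za_1^*(z)$, check the symmetry types, and conclude by the uniqueness in Proposition \ref{abdecomp}. The only difference is cosmetic: you reverse each identity first and then subtract, while the paper multiplies (\ref{eq2}) by $z$, subtracts to get $h^*(z)=a^*(z)-za_1^*(z)$, and reverses afterwards.
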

\begin{proof}
By Proposition \ref{abdecomp}, such a decomposition is unique (with $s=d+1$ playing the role of $d$ and $l=1$). So it is enough to check that the proposed polynomials $b_\Pi(z)= a^*(z)$ and $a_\Pi(z)=-za^*_1(z)$ satisfy the required conditions.  
Similar to the proof of Theorem \ref{th3}, if we multiply (\ref{eq2}) by $z$ and subtract it from (\ref{eq1}),  using that $b^*(z)=b^*_1(z)$  we can check that
\begin{equation*}
h^*(z)= a^*(z)-za^*_1(z).
\end{equation*}
From Corollary \ref{reversing} we have
\begin{equation*}
 h_{\Pi}(z)=z^{d+1}h^*\left(\tfrac{1}{z}\right)=z^{d+1}a^*\left(\tfrac 1z\right)-z^{d+1}\left(\tfrac 1z a^*_1\left(\tfrac 1z\right)\right)=za^*(z)-za^*_1(z).
\end{equation*}
If $b_\Pi(z)= a^*(z)$ and $a_\Pi(z)=-za^*_1(z)$,  we have that 
\begin{equation*}
 h_{\Pi}(z)=za^*(z)-za^*_1(z)=a_\Pi(z)+zb_\Pi(z).
\end{equation*}
 Also $b_\Pi(z)=z^{d}b_\Pi\left(\frac{1}{z}\right)$ and $z^{d+1}a_\Pi\left(\tfrac 1z\right)=-z^{d+1}\left(\tfrac 1z a^*_1\left(\tfrac 1z\right)\right)=-z^{d} a^*_1\left(\tfrac 1z\right)=a_\Pi(z),$
so these are symmetric polynomials that satisfy the desired conditions. 
\end{proof}

Now it is clear that the polynomial $b_\Pi$ has nonnegative coefficients while the polynomial $a_\Pi$ has nonpositive coefficients, due to the nonnegativity of the polynomials in the Stapledon decomposition. This completes the proof of Theorem \ref{th1}.
\end{proof}

\section{Inequalities for Chromatic Polynomials}

\begin{proof}[Proof of Theorem \ref{th2}]
Due to Proposition \ref{chromatic as ao} and since $\Lp_{O^{\circ}_\Pi}(n)=\Omega^{\circ}_\Pi(n-1),$ 
\begin{align*}
\sum_{n= 1}^{\infty}\chi_G(n-1)z^n &= \sum_{n= 1}^{\infty} \sum_{\rho\text{ acyclic}} \Omega^{\circ}_{\Pi_{\rho}}(n-1)z^n\\
                                     &=\sum_{\rho\text{ acyclic}} \sum_{n= 1}^{\infty} L^{\circ}_{\Pi_{\rho}}(n)z^n    =    \sum_{\rho\text{ acyclic}} \Ehr_{O^{\circ}_{\Pi_\rho}}(z) 
\end{align*}
 and then the numerator polynomial of this series is 
 \begin{equation}\label{hsum}
 zh_G(z)=\sum_{\rho\text{ acyclic}} h_{\Pi_{\rho}}(z), 
 \end{equation}
namely the sum of the corresponding numerator polynomials of the Ehrhart series of the open order polytopes corresponding to the posets $\Pi_\rho$ for all acyclic orientations $\rho$.

Now to find the decomposition $zh_G(z)=a(z)+zb(z)$,  with $a(z)$ and $b(z)$ symmetric so that $a(z)=z^{d+1}a\left(\frac{1}{z}\right)$ and $b(z)=z^{d}b\left(\frac{1}{z}\right)$, it is enough to find the corresponding decomposition for the polynomials $h_{\Pi_{\rho}}(z)$ and add them up together, since the formulas  (\ref{abeq1}) and (\ref{abeq2}) are linear, and all polynomials $h_{\Pi_{\rho}}(z)$ have degree $s=d+1$ and $l=1$.

So if $h_{\Pi_{\rho}}(z)=a_{\Pi_{\rho}}(z)+zb_{\Pi_{\rho}}(z)$ for polynomials as claimed in Theorem \ref{th1}, then the decomposition for $zh_G(z)$ is given by 
\begin{equation}\label{abhat}
a(z)=\sum_{\rho\text{ acyclic}} a_{\Pi_{\rho}}(z)\text{\qquad and \qquad}b(z)=\sum_{\rho\text{ acyclic}} b_{\Pi_{\rho}}(z), 
\end{equation}
 and it is clear that necessarily $b(z)$ and $-a(z)$ have nonnegative coefficients. 
 \end{proof}
 
\begin{proof}[Proof of Corollary \ref{cor}]
From Corollary \ref{reversing} and  (\ref{hsum}) we can see that $zh_G(z)$ is a polynomial of degree $d+1$ with nonnegative coefficients, and therefore $\deg(h_G)=d$. Moreover we observe that the leading coefficient $h_{d}$ is equal to the number of acyclic orientations of $G$. 
 Due to Proposition \ref{abdecomp} applied to $zh_G(z)$ and equations (\ref{abeq1}) and (\ref{abeq2}) we can find that the coefficients of the polynomials $a(z)$ and $b(z)$ are
 \begin{align}
a_i &=0+h_0+h_1+\cdots+h_{i-1}-h_{d}-\cdots-h_{d-i+1},\label{abh1}\\
b_i &=-0 -h_0-h_1-\cdots-h_{i-1}+h_{d}+\cdots+h_{d-i},\label{abh2}
\end{align}
where $a(z)=\sum_{i=0}^{d+1} a_iz^i$ and $b(z)=\sum_{i=0}^{d} b_iz^i$. Here the decomposition is made with respect to degree $s=d+1,$ and $l=1$. From Theorem \ref{th2} we get the inequalities  $-a_i\ge 0$  for all $i$ between $1$ and $\lfloor \frac{d+1}{2}\rfloor$, that give us the desired result. 
 \end{proof}
Notice that the inequalities $b_i\le 0$ for $0\le i\le d$ are weaker than those of the form  $-a_i\ge 0$, since $b_i=-a_i+h_{d-i}\ge -a_i.$
Since the polynomial $a(z)$ is symmetric, we do not get anything new for $i>\lfloor \frac{d+1}{2}\rfloor$.


\section{Further Conjectures}

We would like to know if the following conjecture holds.

\begin{conjecture}\label{c2}
Let $G$ be a graph with $d$ vertices. Then $h_G(z)$ can be decomposed as $h_G(z)=a(z)+zb(z)$ where $a(z)=z^{d}a\left(\frac{1}{z}\right)$ and $b(z)=z^{d-1}b\left(\frac{1}{z}\right)$ are symmetric polynomials such that $b(z)$ and $-a(z)$ have nonnegative coefficients.
\end{conjecture}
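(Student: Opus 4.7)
The plan is to iterate the strategy of Theorem \ref{th2} one level deeper. In that proof we expressed
\begin{equation*}
h_G(z) \;=\; \sum_\rho a^*_\rho(z) \;-\; \sum_\rho a^*_{1,\rho}(z),
\end{equation*}
where $a^*_\rho$ and $a^*_{1,\rho}$ are the symmetric polynomials (of degrees $d$ and $d-1$ respectively, with nonnegative coefficients) produced by Stapledon's theorem applied to $O_{\Pi_\rho}$ and to its projection $\mu(O_{\Pi_\rho})$. This is already a signed decomposition of $h_G$ into two nonnegative symmetric polynomials, but the positive part has degree $d$ rather than the degree $d-1$ required of $b(z)$ in the conjecture. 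The natural next move is to apply Stapledon's theorem one more time to a lattice polytope $Q_\Pi$ of dimension $d-2$ whose $h^*$-polynomial agrees with that of $O_\Pi$: this would yield a third symmetric nonnegative polynomial $a^*_{2,\rho}(z)$ of degree $d-2$, and a telescoping argument analogous to the proof of Theorem \ref{th1} would produce the decomposition $h_G(z)=a(z)+zb(z)$ with precisely the signs and degrees claimed in Conjecture \ref{c2}.

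The hard part will be constructing $Q_\Pi$. The projection $\mu$ exploited the canonical diagonal direction $(1,\dots,1)$ from the minimum to the maximum vertex of every order polytope, and there is no evident second direction permitting a unimodular reduction that preserves the $h^*$-polynomial. Natural candidates---a further projection along a maximal chain of $\Pi$, a contraction to the chain polytope, or a different coordinate quotient adapted to the poset---might work in special cases, but verifying preservation of the $h^*$-polynomial (and unimodularity of the induced triangulation) in general appears delicate.

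A complementary combinatorial reformulation is available. Applying Proposition \ref{abdecomp} to $h_G(z)$ with $s=d$ and $l=1$, together with formulas (\ref{abeq1}) and (\ref{abeq2}) and the observation that $h_0=\chi_G(0)=0$, Conjecture \ref{c2} is equivalent to the family of inequalities
\begin{equation*}
h_d+h_{d-1}+\cdots+h_{d-i+1}\;\ge\; h_1+h_2+\cdots+h_i, \qquad 1\le i\le \lfloor d/2\rfloor,
\end{equation*}
each strengthening the corresponding inequality of Theorem \ref{cor} by exactly one additional term on the right-hand side. Through the descent-polynomial expansion of each $h^*_{O_{\Pi_\rho}}$, these translate into a statement about the distribution of descents on pairs $(\rho,\sigma)$ with $\rho$ an acyclic orientation of $G$ and $\sigma$ a linear extension of $\Pi_\rho$, which in principle could be established by an explicit injection or sign-reversing involution. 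Alternatively, one may view $h_G$ as the numerator polynomial of the Ehrhart series of the inside-out polytope $([0,1]^d,\mathcal{H}_G)$ of Beck and Zaslavsky associated with $G$, and seek a Stapledon-type nonnegativity theorem in that setting; the additional combinatorial data coming from the hyperplane arrangement may provide the extra room needed to upgrade Theorem \ref{cor} to Conjecture \ref{c2}.
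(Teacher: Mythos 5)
The statement you are trying to prove is left open in the paper: it is Conjecture \ref{c2}, supported there only by computer experimentation, and the paper's Section 6 establishes just the chain of implications Conjecture \ref{q1} $\Rightarrow$ Conjecture \ref{c1} $\Rightarrow$ Conjecture \ref{c2} $\Rightarrow$ Conjecture \ref{conjcor}. Your proposal follows essentially this same route, but it does not close it: the entire weight of the argument rests on the existence of a lattice polytope $Q_\Pi$ of dimension $d-2$ whose $h^*$-polynomial equals that of $O_\Pi$, and that existence statement is precisely Conjecture \ref{q1}, which you do not prove (and which must in any case exclude the cube $O_\Pi=C_d$, i.e.\ the antichain orientations, whose contribution has to be handled separately via the symmetry of the Eulerian polynomial, as the paper does in its proof that Conjecture \ref{q1} implies Conjecture \ref{c1}). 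You are candid that constructing $Q_\Pi$ is ``the hard part,'' but that candor does not change the verdict: what you have written is a research plan, not a proof. The telescoping step itself, granted $Q_\Pi$, is fine --- it is the same manipulation as in Theorems \ref{th3} and \ref{th1}, using the $d$-independence of the $b$-part in Proposition \ref{abdecomp} --- so the only gap, but a genuine one, is the unproved input.

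Your second paragraph of alternatives is likewise only a reformulation, not an argument. The equivalence of Conjecture \ref{c2} with the inequalities $h_d+\cdots+h_{d-i+1}\ge h_1+\cdots+h_i$ for $1\le i\le \lfloor d/2\rfloor$ is correct (using $h_0=\chi_G(0)=0$, the nonnegativity of the $h_i$ from Theorem \ref{cor}, and $b_i=-a_i+h_{d-i}\ge -a_i$, so the conditions on $b$ follow from those on $a$); this is exactly Conjecture \ref{conjcor}, which the paper also derives as a consequence of Conjecture \ref{c2} rather than a proof of it. The suggested descent-statistic injection and the inside-out-polytope (Beck--Zaslavsky) route are plausible directions, but neither is carried out, so no part of the proposal settles the conjecture. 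In short: your strategy coincides with the paper's own proposed reduction and is consistent with it, but the statement remains unproved both in your write-up and in the paper.
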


The following is the corresponding conjecture for open order polytopes.
\begin{conjecture}\label{c1}
Let $O_{\Pi}$ be an order polytope of dimension $d$ and  Let $p_{\Pi}$ be the numerator polynomial of the series $$\sum_{n= 1}^{\infty} \Omega^{\circ}_{\Pi}(n)z^n=\frac{p_{\Pi}(z)} {(1-z)^{d+1}}.$$ Then $p_{\Pi}$ can be decomposed as $p_{\Pi}(z)=a(z)+zb(z)$ where the polynomials $a(z)$ and $b(z)$ are symmetric with $a(z)=z^{d}a\left(\frac{1}{z}\right)$ and $b(z)=z^{d-1}b\left(\frac{1}{z}\right)$, and so that $b(z)$ and $-a(z)$ have nonnegative coefficients.
\end{conjecture}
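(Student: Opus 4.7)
The plan is to adapt the argument of Theorem \ref{th1} and then extract the finer decomposition through Proposition \ref{abdecomp}. The first step is to identify $p_{\Pi}$ with the numerator polynomial of the open Ehrhart series of the projected polytope $\mu(O_{\Pi})$. Since $\Omega^{\circ}_{\Pi}(n) = \Lp_{O^{\circ}_{\Pi}}(n+1)$ and $\Lp_{O^{\circ}_{\Pi}}(1)=0$ (the interior of $O_{\Pi}$ contains no integer points), we have $\sum_{n\ge 1}\Omega^{\circ}_{\Pi}(n)z^n = z^{-1}\Ehr_{O^{\circ}_{\Pi}}(z)$, so $p_{\Pi}(z)=h_{\Pi}(z)/z$. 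Combining this with Corollary \ref{reversing} and the lemma in the proof of Theorem \ref{th1} (which gives $h^*_{\mu(O_{\Pi})}=h^*_{O_{\Pi}}$) yields $p_{\Pi}(z)=z^d h^*(1/z)=h_{\mu(O_{\Pi})}(z)$.

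Next, I would repeat the Stapledon applications from the proof of Theorem \ref{th1}. Applying Theorem \ref{stapledon} to both $O_{\Pi}$ (dimension $d$) and $\mu(O_{\Pi})$ (dimension $d-1$) produces nonnegative symmetric polynomials $a^*,b^*,a_1^*,b_1^*$ with $b^*=b_1^*$ and the identity $h^*(z)=a^*(z)-z\,a_1^*(z)$. Reversing this identity gives $p_{\Pi}(z)=a^*(z)-a_1^*(z)$. Now Proposition \ref{abdecomp} with parameter $d$ and $l=1$ provides a unique decomposition $p_{\Pi}(z)=a(z)+z\,b(z)$ of the type demanded by the conjecture, and the formulas (\ref{abeq1})--(\ref{abeq2}), together with the symmetries of $a^*$ and $a_1^*$ and the identity $a^*_i=h^*_i+(a_1^*)_{i-1}$, give
\[
b_i=(a_1^*)_i,\qquad a_i=h^*_i-(a_1^*)_i.
\]
The nonnegativity $b_i\ge 0$ is then immediate from Stapledon applied to $\mu(O_{\Pi})$.

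The remaining task, which is the central obstacle, is to show $a_i\le 0$, i.e., $(a_1^*)_i\ge h^*_i$. Via the Stapledon formula this is equivalent to the partial-sum inequality
\[
\sum_{j=0}^{i-1} h^*_j \;\ge\; \sum_{k=d-i}^{d-1} h^*_k,
\]
which strictly strengthens the Stapledon nonnegativity $(a_1^*)_i\ge 0$ available for $\mu(O_{\Pi})$. The natural strategy is to iterate the Stapledon recursion: producing a lattice polytope of dimension $d-2$ with the same $h^*$-polynomial would, by Theorem \ref{stapledon}, yield a nonnegative polynomial $a_2^*$ satisfying $a_1^*(z)=h^*(z)+z\,a_2^*(z)$, and hence $(a_1^*)_i-h^*_i=(a_2^*)_{i-1}\ge 0$. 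A codegree argument shows that $\deg h^*=d-1$ occurs for an order polytope only when $\Pi$ is an antichain, in which case $h^*$ is Eulerian and palindromic, so $b_1^*=0$ and the inequality collapses to equality. In the remaining regime $\deg h^*\le d-2$, the central technical hurdle will be to realize such a $(d-2)$-dimensional polytope with the same $h^*$---perhaps through a further projection on $\mu(O_{\Pi})$ exploiting the alcoved or compressed structure from \cite{HPPS} and \cite{postnikovAP}---or, alternatively, to establish the partial-sum inequality by a direct combinatorial argument on the descent statistics of linear extensions of $\Pi$.
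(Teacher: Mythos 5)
Your reduction is correct as far as it goes, and it retraces the paper's own steps: the identification $p_\Pi(z)=h_\Pi(z)/z=z^dh^*\left(\frac1z\right)$, the two applications of Theorem \ref{stapledon} to $O_\Pi$ and to $\mu(O_\Pi)$ giving $h^*(z)=a^*(z)-z\,a_1^*(z)$, and the resulting unique decomposition from Proposition \ref{abdecomp} with $b(z)=a_1^*(z)$ and $a_i=h^*_i-(a_1^*)_i$ are all accurate (this is the lemma inside the proof of Theorem \ref{th1}, shifted by one degree), as is the codegree observation that $\deg h^*\le d-1$ for order polytopes, with equality exactly for the antichain, where $h^*$ is the palindromic Eulerian polynomial, $b_1^*=0$, and $a(z)=0$.

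The genuine gap is that the step you yourself flag as ``the central obstacle'' is the entire content of the statement: the inequality $(a_1^*)_i\ge h^*_i$, equivalently $h^*_0+\cdots+h^*_{i-1}\ge h^*_{d-i}+\cdots+h^*_{d-1}$, is never established. The route you propose --- realize the same $h^*$ as the numerator polynomial of a $(d-2)$-dimensional lattice polytope and apply Theorem \ref{stapledon} once more --- is precisely Conjecture \ref{q1} of the paper, and the implication you sketch is the paper's own Proposition that Conjecture \ref{q1} implies Conjecture \ref{c1}. The statement you were asked to prove is in fact left as a conjecture in the paper, supported only by computer experiments, so there is no proof to match; what you have produced is a correct reduction to an open problem, not a proof. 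Moreover, the fallback ideas are not obviously viable: a further orthogonal projection of $\mu(O_\Pi)$ has no reason to preserve the $h^*$-polynomial, since the argument for $\mu$ relied on the very special fact that every simplex of the canonical triangulation contains both $(0,\ldots,0)$ and $(1,\ldots,1)$, which are identified under $\mu$; and no descent-statistic argument for the partial-sum inequality is given here or in the paper. Until one of these is carried out, the conjecture remains unproved.
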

Due to Theorem \ref{stanley} we have $\Lp_{O^{\circ}_\Pi}(n)=\Omega^{\circ}_\Pi(n-1)$. Then the numerator polynomials of the corresponding series satisfy  $p_\Pi(z)=\frac 1z h_\Pi(z),$ since we shift the series one position.
Also, by Corollary \ref{reversing}, we can write $p_\Pi(z)=z^dh^*\left(\frac{1}{z}\right),$ where $h^*(z)$ denotes the numerator polynomial of $\Ehr_{O_\Pi}(z)$. This is always a polynomial, since $\deg(h^*)\le d$. 

It is clear that Conjecture \ref{c2} is a consequence of Conjecture \ref{c1}, since it is obtained simply by adding the corresponding decomposition of each poset $\Pi_\rho$ for all acyclic orientations $\rho$ of $G$. 

%

After some computer experimentation (mainly using the mathematical software Sage \cite{sagemath}), we believe these conjectures are true. Also, we can obtain Conjecture \ref{c2} if the following holds.

\begin{conjecture}\label{q1}
 For any order $d$-polytope $O_\Pi$ different from the $d$-cube $C_d$, there is a lattice polytope $P_\Pi$ of dimension $d-2$ so that the numerator polynomial of its Ehrhart series $\Ehr_{P_\Pi}(z)$ is equal to $h^*(z)$, the numerator polynomial of $\Ehr_{O_\Pi}(z)$. 
\end{conjecture}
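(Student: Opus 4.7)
The plan is to iterate the projection technique from the proof of Theorem~\ref{th1}: that proof produces a $(d-1)$-dimensional lattice polytope $\mu(O_{\Pi})$ with the same $h^{*}$-polynomial as $O_{\Pi}$ by projecting along $(1,\ldots,1)$ so that the vertices $(0,\ldots,0)$ and $(1,\ldots,1)$ collapse. The goal is to carry out one further dimension-reducing step of this kind in order to reach dimension $d-2$, while preserving $h^{*}$.

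First I would establish the necessary degree bound. Using the description of $h^{*}(z)$ as the descent generating function $\sum_{\sigma}z^{\mathrm{des}(\sigma)}$ over the linear extensions of $\Pi$ relative to a natural labeling (mentioned at the end of Section~4), one observes that the maximum possible descent number $d-1$ is realized only by the reversed labeling, and this reversal is itself a linear extension of $\Pi$ only when $\Pi$ has no comparable pairs. Hence $\deg h^{*}_{O_{\Pi}}\le d-2$ whenever $O_\Pi$ is not the $d$-cube, matching the largest degree compatible with $h^{*}_{O_{\Pi}}$ being the $h^{*}$-polynomial of a $(d-2)$-dimensional lattice polytope.

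Next I would handle the case where $\Pi$ admits a unique minimum $\hat 0$ (the case of a unique maximum being symmetric). Fixing the value of $\hat 0$ and counting weak order-preserving maps from $\Pi':=\Pi\setminus\{\hat 0\}$ above this value yields $\Ehr_{O_{\Pi}}(z)=\frac{1}{1-z}\Ehr_{O_{\Pi'}}(z)$. Since $\dim O_{\Pi'}=d-1$, comparing the standard Ehrhart denominators forces $h^{*}_{O_{\Pi}}=h^{*}_{O_{\Pi'}}$. The projection $\mu$ of Theorem~\ref{th1} applies in any dimension, so $P_{\Pi}:=\mu(O_{\Pi'})$ is a $(d-2)$-dimensional lattice polytope with $h^{*}_{P_{\Pi}}=h^{*}_{O_{\Pi}}$. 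When $\Pi$ has both a unique minimum and a unique maximum, iterating the reduction shows even more directly that $P_{\Pi}=O_{\Pi\setminus\{\hat 0,\hat 1\}}$ works.

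The main obstacle is the remaining case, where $\Pi$ has several minimal and several maximal elements (for instance the N-poset or a disjoint union of chains). Here the only pair of distinct vertices of $O_{\Pi}$ identified by a projection along the all-ones direction is $\{0,\mathbf{1}\}$, so a second iteration of $\mu$ in the obvious way does not reduce the dimension again without changing $h^{*}$. My strategy in this case would be to exploit the observation in the paper that $\mu(O_{\Pi})$ is an alcoved polytope with a regular unimodular triangulation indexed by the linear extensions of $\Pi$, and to attempt to realize $h^{*}_{O_{\Pi}}$ as the $h^{*}$-polynomial of a $(d-2)$-dimensional alcoved polytope (or a Cayley-type polytope built from linear extensions) whose canonical triangulation carries the same descent distribution. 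The hardest step is proving that such a candidate is actually a convex lattice polytope of dimension exactly $d-2$ realizing the prescribed $h^{*}$-vector; the computer evidence cited in the paper strongly suggests existence, but no canonical geometric construction is yet apparent in this regime.
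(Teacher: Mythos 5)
There is a genuine gap, and you acknowledge it yourself: the final case, where $\Pi$ has several minimal and several maximal elements, is exactly the hard content of the statement, and your last paragraph is a strategy sketch rather than an argument. Nothing in it produces a candidate $(d-2)$-dimensional lattice polytope, let alone proves that it is convex, lattice, of the right dimension, and has the prescribed $h^*$-vector; knowing that $\mu(O_\Pi)$ is alcoved with a unimodular triangulation indexed by linear extensions does not by itself yield a further dimension-reducing operation that preserves $h^*$, since (as you note) the all-ones projection only identifies the vertices $0$ and $\mathbf{1}$, and that collapse has already been used once. Note also that the statement is a \emph{conjecture} in the paper: the author offers only computational evidence (Sage experiments) and proves merely that it implies Conjecture \ref{c1}, so there is no proof in the paper that your attempt could be matching; a complete argument here would be new mathematics.

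Your partial reductions, for what they are worth, look correct. The degree bound $\deg h^*_{O_\Pi}\le d-2$ for $\Pi$ not an antichain follows from the descent description of $h^*$ (only the reversed word has $d-1$ descents, and it is a linear extension only for the antichain), which is a necessary condition but of course far from sufficient. The unique-minimum case is also fine: summing weak order-preserving maps over the value of $\hat 0$ gives $\Lp_{O_\Pi}(n)=\sum_{j=0}^{n}\Lp_{O_{\Pi'}}(j)$, hence $\Ehr_{O_\Pi}(z)=\frac{1}{1-z}\Ehr_{O_{\Pi'}}(z)$ and $h^*_{O_\Pi}=h^*_{O_{\Pi'}}$, after which the paper's projection lemma applied to $O_{\Pi'}$ lands in dimension $d-2$. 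But posets such as the N-poset or disjoint unions of chains, which you name, fall outside every case you settle, so the conjecture remains unproved by this proposal.
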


\begin{proposition}
 Conjecture \ref{q1}  implies Conjecture \ref{c1}.
\end{proposition}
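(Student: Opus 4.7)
The plan is to use the polytope $P_\Pi$ provided by Conjecture \ref{q1} as a conduit to apply Theorem \ref{th3}, and to treat the cube $C_d$ separately. The key identity driving the argument is
$$p_\Pi(z) = z \cdot h_{P_\Pi}(z),$$
which follows from Corollary \ref{reversing}: applied to $O_\Pi$ (dimension $d$) it gives $p_\Pi(z) = z^d h^*(1/z)$, while applied to $P_\Pi$ (dimension $d-2$, whose $h^*$-polynomial coincides with that of $O_\Pi$) it gives $h_{P_\Pi}(z) = z^{d-1} h^*(1/z)$.

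Assuming $O_\Pi \neq C_d$, I would let $P_\Pi$ be as in Conjecture \ref{q1} and invoke Theorem \ref{th3} in dimension $d-2$ to decompose
$$h_{P_\Pi}(z) = a_{P_\Pi}(z) - b_{P_\Pi}(z),$$
where $a_{P_\Pi}(z) = z^{d-1} a_{P_\Pi}(1/z)$ and $b_{P_\Pi}(z) = z^{d-2} b_{P_\Pi}(1/z)$ are symmetric with nonnegative integer coefficients. I would then set
$$a(z) := -z\, b_{P_\Pi}(z), \qquad b(z) := a_{P_\Pi}(z),$$
so that $a(z) + zb(z) = z\bigl(a_{P_\Pi}(z) - b_{P_\Pi}(z)\bigr) = z\,h_{P_\Pi}(z) = p_\Pi(z)$. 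Checking $a(z) = z^d a(1/z)$ reduces to the symmetry of $b_{P_\Pi}$ (multiplication by $-z$ shifts the axis of reflection by one degree), while $b(z) = z^{d-1} b(1/z)$ is immediate from the symmetry of $a_{P_\Pi}$. The sign conditions $b \ge 0$ and $-a \ge 0$ coefficientwise follow directly from the nonnegativity in Theorem \ref{th3}.

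For the exceptional case $O_\Pi = C_d$ the underlying poset is an antichain, so $\Omega^{\circ}_{\Pi}(n) = n^d$ and one computes $p_\Pi(z) = z\, A_d(z)$, where $A_d(z)$ is the $d$-th Eulerian polynomial. Since $A_d$ is palindromic of degree $d-1$ with nonnegative integer coefficients, taking $a(z) = 0$ and $b(z) = A_d(z)$ already realizes Conjecture \ref{c1} for the cube. I expect the only real obstacle to be the bookkeeping of degrees and symmetry indices as one passes between dimensions $d$ and $d-2$. Conceptually, the cube is forced to be excluded from Conjecture \ref{q1} because $\deg A_d = d-1$ exceeds the allowed degree $d-2$ of the $h^*$-polynomial of any $(d-2)$-dimensional lattice polytope, which is exactly the reason it also needs to be handled separately in this argument.
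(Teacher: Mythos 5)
Your proposal is correct and follows essentially the same route as the paper: the cube case is handled via the Eulerian polynomial with $a(z)=0$, $b(z)=h^*(z)$, and in the non-cube case Theorem \ref{th3} is applied to the $(d-2)$-dimensional polytope $P_\Pi$ from Conjecture \ref{q1}, yielding exactly the paper's choice $a(z)=-z\,b_{P_\Pi}(z)$, $b(z)=a_{P_\Pi}(z)$ via the identity $p_\Pi(z)=z\,h_{P_\Pi}(z)$. Your closing remark explaining why the cube must be excluded (its $h^*$ has degree $d-1>d-2$) is a nice observation not made explicitly in the paper, but the argument itself is the same.
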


\begin{proof}
First let us consider the case when 
$\Pi$ is the poset on $d$ elements where no pair of elements are comparable. For this $\Pi$ it holds that $O_\Pi=[0,1]^d$ is the unit $d$-cube $C_d$. It is well known that in this case we have that  $$h^*(z)=\sum_{k=0}^{d-1} A(d,k) z^k$$ where the coefficients of the polynomial are given by the Eulerian numbers $A(d,k)$ that count the number of permutations of $\{1,\ldots,d \}$ that have exactly $k$ descents (see for example \cite[Section 2.2]{beckrobins}). This is a symmetric polynomial satisfying $h^*(z)=z^{d-1} h^*\left(\frac{1}{z}\right)$, whose coefficients satisfy many interesting combinatorial relationships.
In this case $$p_\Pi(z)=z^{d} h^*\left(\tfrac 1z\right)=zh^*(z),$$ and the decomposition in Conjecture \ref{c1} for $p_\Pi$ is given by $a(z)=0$ and $b(z)=h^*(z)$, which satisfies the required conditions. 

Now we consider the case where there is at least one order relationship between a pair of elements in $\Pi$. Then the order polytope $O_{\Pi}$ is different from the cube $C_d$, since we are subdividing it with at least one hyperplane. So by Conjecture \ref{q1} we can assume the existence of a $(d-2)$-polytope $P_{\Pi}$ such that the numerator polynomial of $\Ehr_{P_\Pi}(z)$ is precisely $h^*(z)$. 

Applying Theorem \ref{th3} to this polytope, we find that the numerator polynomial $h_{P_\Pi}$ of the open Ehrhart series $\Ehr_{P_\Pi}^\circ(z)$ is
$$h_{P_\Pi}(z)=a_{P_\Pi}(z)-b_{P_\Pi}(z)$$ where $a_{P_\Pi}(z)=z^{d-1}a_{P_\Pi}\left(\frac{1}{z}\right)$ and $b_{P_\Pi}(z)=z^{d-2}b_{P_\Pi}\left(\frac{1}{z}\right)$ and both polynomials $a_{P_\Pi}(z)$ and $b_{P_\Pi}(z)$ have nonnegative coefficients. 

By Corollary \ref{reversing}, $h_{P_\Pi}(z)=z^{d-1}h^*(z)$ and then 
$$p_\Pi(z)=z^dh^*\left(\tfrac 1z\right)=zh_{P_\Pi}(z)=za_{P_\Pi}(z)-zb_{P_\Pi}(z).$$
We can check that the desired decomposition from Conjecture \ref{c1} is given by $a(z)=-zb_{P_\Pi}(z)$ and $b(z)=a_{P_\Pi}(z)$. This decomposition is unique due to Lemma \ref{abdecomp}. Clearly $b(z)$ and $-a(z)$ have nonnegative coefficients.
\end{proof}

In case Conjecture \ref{q1} is true, we would like to know if it is possible to find one such polytope  $P_\Pi$  having a unimodular triangulation that is combinatorially equivalent to the triangulation obtained by intersecting $\mu (O_\Pi)$ with the boundary of $\mu (C_d)$. This would be stronger than Conjecture \ref{q1}. 

Analog to Corollary \ref{cor}, our conjectures would imply the following inequality for the coefficients of chromatic polynomials.
\begin{conjecture}\label{conjcor} Let $G$ be a graph on $d$ vertices and $h_G(z)=\sum_{i=0}^{d} h_iz^i$. Then for $1\le i\le \lfloor \frac{d}{2}\rfloor$ it holds that 
$$h_{d}+\cdots+h_{d-i+1}-h_0-\cdots-h_{i}\ge 0.$$
\end{conjecture}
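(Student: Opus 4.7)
The plan is to mimic the proof of Corollary \ref{cor}, replacing the unconditional decomposition of $zh_G(z)$ from Theorem \ref{th2} with the finer decomposition of $h_G(z)$ predicted by Conjecture \ref{c2}. Assuming Conjecture \ref{c2}, the desired inequalities fall out directly from the Stapledon formulas (\ref{abeq1}) and (\ref{abeq2}).

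First I would apply Proposition \ref{abdecomp} to $h_G(z)=\sum_{i=0}^{d} h_i z^i$ with ambient parameter $d$, degree $s=d$, and therefore $l=d+1-s=1$. This yields the unique decomposition $h_G(z)=a(z)+zb(z)$ in which $a(z)=z^{d}a(1/z)$ is symmetric of degree $d$ and $b(z)=z^{d-1}b(1/z)$ is symmetric of degree $d-1$. Conjecture \ref{c2} asserts that $b(z)$ and $-a(z)$ both have nonnegative coefficients; the inequalities I want to derive are contained in the statement $-a_i\ge 0$.

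Next, plugging $s=d$ into (\ref{abeq1}) gives
$$a_i = h_0+h_1+\cdots+h_i - h_d-h_{d-1}-\cdots-h_{d-i+1},$$
so that $-a_i\ge 0$ rewrites as
$$h_d+h_{d-1}+\cdots+h_{d-i+1}-h_0-h_1-\cdots-h_i\ge 0,$$
which is precisely the inequality claimed in Conjecture \ref{conjcor}. Since $a$ is symmetric of degree $d$, the relation $a_i=a_{d-i}$ means that only indices $0\le i\le \lfloor d/2\rfloor$ produce independent information; the case $i=0$ reads $-h_0\ge 0$, which is trivial because $\chi_G(0)=0$ forces $h_0=0$. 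This justifies the stated range $1\le i\le \lfloor d/2\rfloor$.

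The main obstacle is of course that the whole argument is conditional on Conjecture \ref{c2}, which is unproven. Its unconditional predecessor Theorem \ref{th2} only supplies a decomposition of $zh_G(z)$, and losing the extra factor of $z$ is exactly the substance of the conjecture. Following the reduction indicated in the paper, one would try to obtain Conjecture \ref{c2} by summing the posetwise decomposition of Conjecture \ref{c1} over all acyclic orientations of $G$, and Conjecture \ref{c1} would in turn follow from Conjecture \ref{q1}, the geometric statement that every non-cube order polytope shares its $h^*$-polynomial with some lattice polytope of dimension $d-2$. Producing such a polytope in general is the substantive open problem that must be resolved before the present argument can be made unconditional.
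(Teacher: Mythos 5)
Your argument is exactly the paper's: it proves the implication "Conjecture \ref{c2} implies Conjecture \ref{conjcor}" by applying Proposition \ref{abdecomp} with $s=d$, $l=1$ and reading off $-a_i\ge 0$ from formula (\ref{abeq1}), which is precisely the paper's own (necessarily conditional) treatment of this statement. You correctly flag that the substance left open is Conjecture \ref{c2} itself (via Conjectures \ref{c1} and \ref{q1}), matching the paper's reduction chain.
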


\begin{proposition}\label{}
Conjecture \ref{c2} implies Conjecture \ref{conjcor}. 

\end{proposition}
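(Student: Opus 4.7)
The plan is to apply Proposition~\ref{abdecomp} to $h_G(z)$ and then read off the inequality directly from the nonnegativity of $-a(z)$ postulated by Conjecture~\ref{c2}. The key observation is that the decomposition appearing in Conjecture~\ref{c2} is exactly the unique decomposition produced by Proposition~\ref{abdecomp}, so its coefficients must be the ones given by Stapledon's explicit formula~(\ref{abeq1}).

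First I would recall, as in the proof of Corollary~\ref{cor}, that $h_G(z)$ is a polynomial of degree exactly $d$: its leading coefficient $h_d$ equals the number of acyclic orientations of $G$, which is strictly positive. Therefore Proposition~\ref{abdecomp}, applied to $h_G(z)$ with $s = d$ and $l = d+1-s = 1$, produces a unique pair of symmetric polynomials $a(z)$ and $b(z)$ satisfying $a(z) = z^d a(1/z)$, $b(z) = z^{d-1} b(1/z)$, and $h_G(z) = a(z) + zb(z)$. These are precisely the normalizations appearing in Conjecture~\ref{c2}, so by uniqueness the $a(z)$ of the conjecture coincides with the $a(z)$ of Proposition~\ref{abdecomp}; in particular formula~(\ref{abeq1}) gives
$$a_i = h_0 + h_1 + \cdots + h_i - h_d - h_{d-1} - \cdots - h_{d-i+1}.$$

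Next I would invoke the hypothesis of Conjecture~\ref{c2}, namely that $-a(z)$ has nonnegative coefficients, so $-a_i \ge 0$ for every $i$. Rearranging the displayed identity yields
$$h_d + h_{d-1} + \cdots + h_{d-i+1} - h_0 - h_1 - \cdots - h_i \ge 0,$$
which is exactly the inequality in Conjecture~\ref{conjcor}. The range $1 \le i \le \lfloor d/2 \rfloor$ accounts for all nontrivial content: since $a(z)$ is symmetric of degree $d$, the relation $a_i = a_{d-i}$ makes the bounds for $i > \lfloor d/2 \rfloor$ redundant, while the case $i = 0$ reduces to $h_0 \le 0$, which holds trivially because $\chi_G(0) = 0$ (for $d \ge 1$) forces $h_0 = 0$.

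There is no serious obstacle here; the proof is a direct bookkeeping exercise built on the uniqueness clause of Proposition~\ref{abdecomp} and Stapledon's formula~(\ref{abeq1}). The only point requiring care is matching the normalization of the decomposition in Conjecture~\ref{c2} (with $s = d$, $l = 1$) to the parameters of Proposition~\ref{abdecomp}, after which the sign hypothesis translates coefficient-by-coefficient into the asserted inequality.
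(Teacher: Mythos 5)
Your proof is correct and follows essentially the same route as the paper: apply Proposition~\ref{abdecomp} with $s=d$, $l=1$, identify the conjectured decomposition with the unique one given by formula~(\ref{abeq1}), and translate $-a_i\ge 0$ directly into the stated inequalities. The extra care you take (degree of $h_G$ exactly $d$, uniqueness matching, the redundancy for $i>\lfloor d/2\rfloor$ and the trivial $i=0$ case) is consistent with, and slightly more explicit than, the paper's argument.
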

 
\begin{proof}
In case Conjecture \ref{c2} holds we have the decomposition $h_G(z)=a(z)+zb(z)$ where $a(z)=z^{d}a\left(\frac{1}{z}\right)$, $b(z)=z^{d-1}b\left(\frac{1}{z}\right)$ and so that $b(z)$ and $-a(z)$ have nonnegative coefficients. Due to Proposition \ref{abdecomp} and equations (\ref{abeq1}) and (\ref{abeq2}), the coefficients of the polynomials $a(z)$ and $b(z)$ are
 \begin{align*}
a_i &=h_0+h_1+\cdots+h_i-h_{d}-\cdots-h_{d-i+1},\\
b_i &=-h_0-h_1-\cdots-h_i+h_{d}+\cdots+h_{d-i}.
\end{align*}
Here $s=d$ and $l=1$. From the inequalities  $-a_i\ge 0$  for all $i$ between $1$ and $\lfloor \frac d2\rfloor$ we get the desired result. 
 \end{proof}
As before, the inequalities $b_i\ge0$ and $-a_i\ge 0$ for $i>\lfloor \frac d2\rfloor$ give us nothing new. The inequalities in Conjecture \ref{conjcor} are stronger than those in Corollary \ref{cor}, since $h_{i+1}\ge 0$.


\subsubsection*{Acknowledgements}

I want to thank Matthias Beck for suggesting this problem and to him, Tristram Bogart and Julián Pulido for many useful conversations and comments on the script. Also thanks to the organizers of ECCO 2016 (Escuela Colombiana de Combinatoria 2016, Medellín, Colombia)  and to Universidad de los Andes for their support.

\bibliography{mybibtex}{}
\bibliographystyle{alpha}

\end{document}